\documentclass[a4paper, 11pt, final, BCOR=5mm, pdftex, pagesize]{scrartcl} 
\usepackage[utf8]{inputenc} 
\usepackage{fancyhdr}
\usepackage{csquotes} 
\usepackage{verbatim}
\usepackage{underscore}
\usepackage{enumitem}
\usepackage{color}
\usepackage{fixltx2e} 
\usepackage{lmodern} 
\usepackage[T1]{fontenc} 
\pagenumbering{arabic}
\usepackage{setspace}

\providecommand{\keywords}[1]
{
  \small	
  \textbf{\textit{Keywords---}} #1
}

\usepackage{amsmath}
\usepackage{amsfonts}
\usepackage{amssymb}
\usepackage{mathrsfs}
\usepackage{mathtools}
\usepackage{dsfont} 
\usepackage{bigints} 
\usepackage{amsthm}
\usepackage{accents}

\usepackage{float} 
\usepackage[margin=0.8cm]{caption} 
\usepackage{subcaption}
\usepackage{graphicx}
\usepackage{pgfplots}
\pgfplotsset{compat=1.8}
\usepackage[draft=false,babel,tracking=true,kerning=true,spacing=true]{microtype} 
\usepackage{tabularx} 

\definecolor{CMblue}{RGB}{0, 153, 153}
\definecolor{CMgreen}{RGB}{76, 153, 0}
\definecolor{CMorange}{RGB}{237, 125, 49}
\definecolor{CMlightblue}{RGB}{204, 255, 255}

\usepackage{chngcntr}
\usepackage[section]{placeins} 
\numberwithin{equation}{section}

\usepackage[noadjust]{cite}
\usepackage{hyperref}

\usepackage{tikz}
\usetikzlibrary{shapes.geometric, patterns}
\pgfplotsset{compat=1.8}
\usepackage{setspace,tabularx} 
\usepackage[draft=false,babel,tracking=true,kerning=true,spacing=true]{microtype} 



\DeclareMathOperator*{\argmax}{arg\,max}

\newcommand{\R}{\mathbb{R}}
\newcommand{\N}{\mathbb{N}}

\newcommand{\E}{\ensuremath\mathbb{E}}
\newcommand{\1}{\textup{$\mathds{1}$}}
\newcommand{\Pro}{\ensuremath\mathbb{P}}

\newcommand{\Var}{\ensuremath\mathrm{Var}}
\newcommand{\Cov}{\ensuremath\mathrm{Cov}}

\newcommand{\inv}{\ensuremath\mathrm{inv}}

\renewcommand{\leq}{\leqslant}
\renewcommand{\geq}{\geqslant}
\renewcommand{\bar}{\overline}
\renewcommand{\tilde}{\widetilde}



\newtheorem{The}{Theorem}[section]
\newtheorem*{The*}{Theorem}
\newtheorem{Ass}{Assumption}
\newtheorem{Exp}{Example}

\newtheorem{Lem}[The]{Lemma}
\newtheorem{Cor}[The]{Corollary}
\newtheorem{Rmk}[The]{Remark}
\newtheorem{Prop}[The]{Proposition}


\begin{document}

\title{Parametric change point detection with random occurrence of the change point}
\author{Cassandra Milbradt \footnote{Humboldt-Universität zu Berlin, Germany. E-mail: cassandra.milbradt@gmail.com}}
\date{}

\maketitle

\vspace{-1cm}

\begin{abstract}
    We are concerned with the problem of detecting a single change point in the model parameters of time series data generated from an exponential family. In contrast to the existing literature, we allow that the true location of the change point is itself random, possibly depending on the data. Under the alternative, we study the case when the size of the change point converges to zero while the sample size goes to infinity. Moreover, we concentrate on change points in the ``middle of the data'', i.e., we assume that the change point fraction (the location of the change point relative to the sample size) converges weakly to a random variable $\lambda^*$ which takes its values almost surely in a closed subset of $(0,1).$ We show that the known statistical results from the literature also transfer to this setting. We substantiate our theoretical results with a simulation study.
\end{abstract}

\keywords{Parametric change point models, random occurrence of the change point, high-asymptotic framework, fourth moment theorem}

\section{Introduction}\label{RCP:sec:introduction}

Detecting structural changes in the parameters of time series data is of great interest from both econometric and statistical perspectives. Traditionally, one is faced with the question of whether the underlying time series data contain one or more change points. While some literature (cf. e.g. \cite{CH97,BJV17,AHHR09}) has also investigated the detection of multiple change points, in this work we focus only on so-called ``at most one change point'' (AMOC) models. Assuming the location of the change point is known, one can interpret the question of deciding whether or not the data contain a change point as a two-sample test problem. However, the location of a change point is typically unknown. Many works on change point models already provide statistical tests to answer this question for unknown change points. While many authors build their tests assuming that the change point occurs only in a single model parameter (typically in its mean, cf. e.g. \cite{JWY18} or in its variance, cf. e.g. \cite{AHHR09, S09}), Horváth \cite{H93}, Gombay and Horváth \cite{GH94, GH96a, GH96b}, and Csörg\H{o} and Horváth  \cite{CH97} provide likelihood ratio-based tests that check for a simultaneous change in the parameters of quite general parametric distributions including exponential families, see Csörg\H{o} and Horváth \cite{CH97} for an overview. Under a long-span asymptotic scheme, in which the time span of data is assumed to go to infinity, the existing literature provides theory for the estimation of a fractional change point (the location of the change point relative to the sample size), including the consistency, the rate of convergence, and the limiting distribution (cf. e.g. \cite{CH97, JWY18, AHHR09}). Although in most of the literature, time series data are considered, some authors study the detection of change points in the drift and/or volatility process of continuous-time diffusions or more general It\^{o}-semimartingales assuming a continuous record or a discrete-time record with mesh size converging to zero over a finite time span is available (cf. e.g. \cite{IY12, JWY18, BJV17}). However, if the size of the model parameters relates appropriately with the samples size, one might approximate the time series data by a continuous-time model and hence might be able to connect the findings for time series data with the theory developed for continuous-time processes. For example, if the time series data are normally distributed and the mean is of order $n^{-1/2}$ while the volatility is of order $1$ ($n$ denotes the sample size), for large $n$, the $n^{-1/2}$-scaled partial sum can be approximated by a diffusion process. Because of the different scaling in $n$ in the mean and volatility, a change point in the mean is typically much harder to detect than in the volatility. Even more, when studying the detection of change points in the mean, the existing literature reveals that the consistency of an estimator for the change point can only be obtained if the size of the change point is of larger order than $n^{-1/2}.$ But then, the approximation of the time series data by a continuous-time model fails since the size of the change point explodes as $n\rightarrow \infty.$ For this reason, \cite{JWY18} studied the asymptotic properties of the change point estimator in the crucial case when the shift in the mean is of order $n^{-1/2}.$ While in this work, we mainly focus on parametric models, the recent literature also provides tools for the detection of structural changes in non-parametric models such as in the volatility process of an It\^{o}-semimartingale (cf. e.g. \cite{BJV17}), or the mean or location parameter of time series data (cf. e.g. \cite{CH97}). \par
Calibration of mathematical models is one of the main concerns from a practitioners' point of view. It is well known that change points are present in high-frequency financial data. In the referenced literature the location of the change point is unknown but deterministic. However, if the change point is caused by endogenous effects, the dependence on the underlying data must be considered. The integrated European intraday electricity market ``Single Intraday Coupling'' (SIDC) is a real-world example in which change points are endogenously caused. In this market, multiple national limit order books are coupled, i.e., summarized in a single shared order book such that market orders are allowed to be matched with standing volumes of the domestic and foreign limit order books. However, the coupling of multiple markets is only maintained as long as transmission capacities are available. In contrast, if the transmission capacities are fully occupied, market orders can only be matched with standing volumes of the same origin. The switch between these two regimes typically leads to structural changes in the trading behavior. In Milbradt \cite{M22}, we construct cross-border market dynamics including prices, standing volumes at the best bid and ask prices, and capacities from the underlying net order flow process and the total available transmission capacities. The time of a regime switch is then modeled by a stopping time depending on the net order flow and on the total available capacities. While the order flow is publicly available, the transmission capacities are harder to obtain and therefore often unknown. Hence, in order to calibrate the model to high-frequency data, the time of a regime switch, that depends on the observed data, must be estimated.\par 

Models that have been studied in the literature, in which the location of a change point is itself random are, for example, so-called Markov switching models. In these models, the type of the regime depends on an unobserved Markov process which is independent of the data (cf. e.g. \cite{CW07, EKS08}). Despite this, to the best of our knowledge, the existing literature on random occurring change points is rather limited. In our work, we extend the statistical results in Csörg\H{o} and Horváth \cite{CH97} to randomly occurring change points, possibly depending on the data. Throughout, we assume that the data points are independent and only study the case when the size of the change point converges to zero while the sample size $n$ goes to infinity. From a statistical point of view, this case describes the crucial setting as it answers the question of which minimum size of a change point is detectable, i.e., if the null hypothesis $H_0$ is ``no change point'' versus the alternative $H_1$ is ``there is one change point'', then we discuss the problem of distinguishing between $H_0$ and $H_1.$ Moreover, under the alternative $H_1,$ if $k^*_n \in \{1,\cdots, n-1\}$ denotes the true but random location of the change point, we concentrate on change points in the ``middle of the data'', i.e., we assume that the change point fraction $k^*_n/n \Rightarrow \lambda^*,$ where $\lambda^*$ is a random variable taking values in a closed subset of $(0,1)$ with probability one. \par 
Our model should be understood as a first proof of concept for the extension of the very general change point theory in \cite{CH97} to randomly occurring change points. We show that the statistical properties of the test statistic as well as of the estimator for the location of the change point transfer from the deterministic setting considered in \cite{CH97} to randomly occurring change points. While this might be clear under the null hypothesis $H_0$, this is not obvious under the alternative. In particular, our work shows that the theory in \cite{CH97} can also be applied in the model framework introduced in \cite{M22} in which the location of a regime switch depends on the underlying net order flow process. \par 
To extend the results in \cite{CH97}, the main difficulty is to show that the limit result for the test statistic under the alternative holds true uniformly for all possible values of the location of the change point. Therefore, we introduce an alternative test statistic depending on two time parameters (i.e. on the true and estimated location of the change point) and show that if this test statistic is scaled appropriately, it converges weakly in the Skorokhod topology to a Gaussian process with two time parameters. The hard part of the proof turns out to correctly identify the finite-dimensional distributions of the limit process. This can be nicely simplified by an application of the fourth moment theorem (cf. Theorem 1 in \cite{NP05}) since we concentrate on normally distributed data when studying the asymptotics under the alternative. After establishing the limit theorem of the test statistic under the alternative, it is indeed straight-forward to prove the known results in \cite{CH97} also for randomly occurring change points. We provide empirical support for our theoretical results through a detailed simulation study. Moreover, in this simulation study we also discuss two important generalizations of our model: weakly dependent observations and non-parametric change point detection in the volatility process of an It\^o-semimartingale. It turns out, at least empirically, that change point detection works for these cases as well, even if the location of the change point depends on the data. \newline

\textbf{Structure of this paper:} In Section $2$, we introduce the model framework and the test statistic based on the so-called maximally selected log-likelihood ratio. Since under the null hypothesis, no change point occurs, in Section $3$, we repeat the results for the asymptotics of the considered test statistic under the null hypothesis in \cite{CH97}. Under the alternative, our more general setting of a change point with random location becomes important. Therefore, in Section $4$, we present a new test statistic depending on the location of a change point and derive its limit distribution relative to the location of a change point (cf. Theorem \ref{RCP:res:limitZn1}). To simplify the proof, we assume in Section $4$ that the observations are normally distributed. Moreover, we introduce an estimator for the fractional change point and establish its consistency, the convergence rate, and the limit distribution. The latter is also stated in a distribution-free version, which allows to build confidence intervals for the true location of the change point based on the data. We finish this paper by a detailed simulation study in Section $5$.\newline

\textbf{Notation:} In the following, for each $x\in \R^d,$ $d \geq 1,$ let us denote by $\|x\|:= (x^Tx)^{1/2}$ the euclidean norm in $\R^d.$ Moreover, we write $\Pro[A, B] := \Pro[A \cap B]$ for $A,B \in \mathcal{F}$ and a probability space $(\Omega, \mathcal{F}, \Pro)$.

\section{Setup}

Throughout, we assume that all random variables are defined on some common probability space $(\Omega, \mathcal{F}, \Pro).$ Let $X_1, \cdots, X_n$ be independent observations in $\R^m$ which have densities $f_{X_j},$ $j = 1, \cdots, n,$ with respect to some $\sigma$-finite measure $\nu$ being element of the exponential family, i.e.,
\begin{equation}\label{RCP:ass:density}
f_{X_j}(x) = f(x; \theta_j) = \exp\left(\theta_j^T T(x)+S(x)- A(\theta_j)\right)\1_{\{x \in C\}},
\end{equation}
where $x = (x_1, \cdots, x_m)^T,$ $\theta_j = (\theta_{j,1}, \cdots, \theta_{j,d})^T \in \Theta \subset \R^d,$ $S, T_1, \cdots, T_d: (\Omega, \mathcal{F}) \rightarrow (\R, \mathcal{B}(\R))$ are measurable functions with $T=(T_1,\cdots, T_d)^T,$ $A: \R^d \rightarrow \R,$ and $C \subset \R^m.$ Note that the representation of the density in \eqref{RCP:ass:density} is often referred to as the natural parametrization of an exponential family.\par 
In our work, we want to test the null hypothesis ``no change point''
\[H_0: \quad \theta_1 = \cdots = \theta_n\]
against the alternative ``there exists one change point''
\begin{align*}
H_1: \quad &\text{There exists an } k^*_n \in \{1, \cdots, n-1\} \text{ such that}\\
&\theta_1 = \cdots = \theta_{k^*_n} \neq \theta_{k^*_n+1} = \cdots = \theta_n.
\end{align*}
This is a so-called ``at most one change point'' (AMOC) model. Such a model is frequently studied in the literature (cf. e.g. \cite{H93, GH94, GH96a, GH96b, CH97, AHHR09}) provided that the true location of a change point $k^*_n$ is unknown but deterministic. Following Csörg\H{o} and Horváth \cite{CH97}, a natural approach to build an appropriate test statistic is based on the likelihood ratio, i.e., if the change point occurs at $k = k^*_n$ known, then we should reject $H_0$ for small values of $\Lambda_k,$ where
\begin{equation}\label{RCP:def:LR}
\Lambda_k := \frac{\sup_{\theta_0 \in \Theta} \prod_{1\leq i \leq n} f(X_i; \theta_0)}{\sup_{\theta_0^{(1)}, \theta_0^{(2)}\in \Theta} \prod_{1\leq i \leq k} f(X_i; \theta^{(1)}_0) \prod_{k < i \leq n} f(X_i; \theta^{(2)}_0)} \in (0,1].
\end{equation}

\begin{Rmk}
    In the definition of the likelihood ratio in \eqref{RCP:def:LR}, we follow the notation in \cite{CH97}. Note however, that in several other literature, the likelihood ratio has been introduced by $(\Lambda_k)^{-1}$. Then, of course, $(\Lambda_k)^{-1} \in [1, \infty)$ and the null hypothesis $H_0$ should be rejected for large values of $(\Lambda_k)^{-1}.$
\end{Rmk}

In order to guarantee the existence of the maximum likelihood estimators and later, when studying their asymptotics, we need some additional regularity assumptions.

\begin{Ass}\label{RCP:ass:reg}
    There exists an open set $\Theta_0 \subset \Theta \subset \R^d$ such that for all $\theta = (\theta_1, \cdots, \theta_d)^T\in \Theta_0,$ we have
    \begin{enumerate}
        \item [i)] $A(\theta)$ has continuous derivatives up to the third order and $A''(\theta) :=  \Big\{\frac{\partial^2}{\partial \theta_i \partial \theta_j} A(\theta),\\1\leq i, j \leq d\Big\}$ is a positive definite matrix.
        \item [ii)] $\inv A'(\theta),$ the unique inverse of $\vartheta \mapsto A'(\vartheta):= \left(\frac{\partial}{\partial \vartheta_1} A(\vartheta), \cdots, \frac{\partial}{\partial \vartheta_d} A(\vartheta)\right)^T$ at $\theta,$ exists.
    \end{enumerate}
\end{Ass}

Under Assumption \ref{RCP:ass:reg} ii) we can find unique maximum likelihood estimators (MLEs) for the parameters before and after the change provided that their true values are contained in $\Theta_0$. Elementary calculations reveal that for each $k = 1,\cdots, n-1$, the MLEs for the parameters before and after a change point $k$ are given by $\inv A'(B_n(k))$ and $\inv A'(B^*_n(k)),$ respectively, where
\[B_n(k) := \frac{1}{k}\sum_{1\leq i \leq k} T(X_i), \quad \text{ and } \quad B^*_n(k) := \frac{1}{n-k}\sum_{k < i \leq n} T(X_i).\]
Plugging in these estimators into \eqref{RCP:def:LR}, we can rewrite the log-likelihood ratio as
\begin{equation}\label{RCP:def:testStat}
S_n(k) := -\log\Lambda_k = kH(B_n(k)) + (n-k) H(B^*_n(k)) - nH(B_n(n)),
\end{equation}
where 
\begin{equation}\label{RCP:def:H}
H(x):= (\inv A'(x))^Tx - A(\inv A'(x)).
\end{equation}

\begin{Rmk}\label{RCP:rmk:derivativeH}
    Note that under Assumption \ref{RCP:ass:reg} for all $x\in \Theta_0$, the derivatives of $H$ up to the third order exists, are continuous in $x$, and
    \[H'(x) = \inv A'(x), \quad H''(x) = \left(A''(H'(x))\right)^{-1}.\]
\end{Rmk}
Since the true location of a change point $k^*_n$ is unknown, it is natural to use the maximally selected log-likelihood ratio and reject $H_0$, if
\[\mathcal{S}_n := \max_{1\leq k \leq n}\{2S_n(k)\}\]
is large.\par

In our work, under $H_1$ ``there exists one change point'', we will assume that the true location of the change point $k^*_n: \Omega \rightarrow \{1,\cdots, n-1\}$ is a random variable. This new framework is of particular interest if the change point is caused due to the occurrence of a stopping time, often depending on the data $X_1,\cdots, X_n$ itself as we already discussed in Section \ref{RCP:sec:introduction}. In the following, we will study the convergence rate and asymptotic distribution of the test statistic $\mathcal{S}_n$ under the null and under the alternative hypothesis. Moreover, under the alternative, we introduce an estimator $\hat{k}_n$ of $k^*_n$ and for the estimator of the fractional change point $\hat{\lambda}_n := \hat{k}_n/n$ of $\lambda^*_n := k^*_n/n$ we establish the consistency, the rate of convergence, and the limit distribution.

\section{Asymptotics under the null}

Under the null hypothesis $H_0$, since there is no change point in the data, we may consult the result in \cite[Theorem 1.1]{GH96a} on a limit theorem for the distribution of $\mathcal{S}_n := \max_{1\leq k \leq n} \{2S_n(k)\}$ under $H_0.$ Note that this result is a corollary of the more general result in \cite{GH94} as we restrict our considerations to densities of exponential form (cf. the assumption in \eqref{RCP:ass:density}). Let $a(x) := (2\log(x))^{1/2},$ 
\[b_d(x) := 2\log(x) + \frac{d}{2}\log\log(x) - \log(\Gamma(d/2)),\]
and $\Gamma(t) := \int_0^{\infty} y^{t-1}e^{-y}dy$ be the Gamma function.

\begin{The}[Asymptotics under the null hypothesis]\label{RCP:res:distH0} Let $H_0$ and Assumption \ref{RCP:ass:reg} be satisfied. Moreover, let $\theta_0 \in \Theta$ be the true value of the parameter in \eqref{RCP:ass:density} which is contained in $\Theta_0$. Then, for all $t \in \R,$
\[\lim_{n\rightarrow \infty} \Pro\left[a(\log n) \mathcal{S}_n^{1/2} \leq t + b_d(\log n)\right] = \exp(-2e^{-t}).\]
\end{The}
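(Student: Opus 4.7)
The strategy is to reduce the statement to a Darling--Erdős type extreme value theorem for the supremum of a $d$-dimensional Bessel-bridge-type process, which is precisely the route followed by Csörgő--Horváth \cite{CH97} and Gombay--Horváth \cite{GH96a}. The starting point is a second-order Taylor expansion of the function $H$ from \eqref{RCP:def:H} around the point $\mu_0 := A'(\theta_0)$. By Remark \ref{RCP:rmk:derivativeH} together with the standard exponential-family identities $\E_{\theta_0}[T(X_1)] = A'(\theta_0) = \mu_0$ and $\Cov_{\theta_0}(T(X_1)) = A''(\theta_0)$, one has $H'(\mu_0) = \theta_0$ and $H''(\mu_0) = (A''(\theta_0))^{-1} =: \Sigma^{-1}$. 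Plugging the MLE expressions $B_n(k)$, $B_n^*(k)$, $B_n(n)$ into the Taylor expansion of $H$ and cancelling first-order contributions (which are linear in $B_n(k)-\mu_0$, $B_n^*(k)-\mu_0$, $B_n(n)-\mu_0$), one obtains the quadratic approximation
\begin{equation*}
2S_n(k) = \frac{n}{k(n-k)} \bigl(U_k - \tfrac{k}{n}U_n\bigr)^T \Sigma^{-1} \bigl(U_k - \tfrac{k}{n}U_n\bigr) + R_n(k),
\end{equation*}
where $U_k := \sum_{i=1}^{k}\bigl(T(X_i)-\mu_0\bigr)$ and $R_n(k)$ is a third-order remainder controlled, uniformly in $k$ away from the boundary, by Assumption \ref{RCP:ass:reg} i) and the LIL for partial sums.

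Next I would apply a KMT-type strong approximation for vector-valued i.i.d. partial sums to replace the standardized process $t \mapsto n^{-1/2}\Sigma^{-1/2}U_{\lfloor nt\rfloor}$ uniformly on $[0,1]$ by a $d$-dimensional standard Brownian motion $W$ with an error of order $O((\log n)/\sqrt{n})$. Setting $t_k := k/n$, the leading quadratic term is then approximated by
\begin{equation*}
\frac{1}{t_k(1-t_k)}\bigl\|W(t_k) - t_k W(1)\bigr\|^2,
\end{equation*}
i.e. the squared euclidean norm of a $d$-dimensional Brownian bridge, weighted by $1/(t_k(1-t_k))$. A logarithmic time change transforms this into a stationary Ornstein--Uhlenbeck-type process on both tails $t \downarrow 0$ and $t \uparrow 1$, and the classical Darling--Erdős extreme-value theorem for the norm of such a process yields the Gumbel-type limit with normalizing sequences $a(\log n)$ and $b_d(\log n)$. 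The factor $2$ multiplying $e^{-t}$ in the limit arises because the supremum picks up two asymptotically independent extreme contributions, one from each of the symmetric tails $k/n \approx 0$ and $k/n \approx 1$.

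The main obstacle is the uniform control of the Taylor remainder $R_n(k)$ and of the KMT error in the boundary regions $k \approx 1$ and $k \approx n-1$, where the normalization $n/(k(n-k))$ blows up; this is where a naive CLT is insufficient. The standard resolution is a two-scale truncation: on the bulk region $\log^2 n \leq k \leq n-\log^2 n$ one shows via an LIL refinement that the supremum stays well below the threshold $b_d(\log n)$, while on the two tail windows one exploits the strong invariance embedding to transfer the Darling--Erdős limit from the Brownian-bridge side. Since Assumption \ref{RCP:ass:reg} (smoothness of $A$ and invertibility of $A'$) delivers all regularity hypotheses needed in \cite[Theorem 1.1]{GH96a}, the cleanest presentation is to carry out the quadratic reduction explicitly and then invoke the Gombay--Horváth extreme-value theorem as a black box to conclude.
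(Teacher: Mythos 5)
The paper does not actually prove this theorem: it explicitly states ``We omit the proof'' and refers to Gombay--Horv\'ath \cite[Theorem 1.1]{GH96a}, with the general argument in \cite{GH94}. Your proposal ends by invoking the same reference as a black box, so in that sense it takes the same route as the paper. The interest of your write-up is that you correctly reconstruct the Darling--Erd\H{o}s mechanism underlying the cited result: the second-order Taylor expansion of $H$ around $\mu_0=A'(\theta_0)$ with exact cancellation of the constant and linear terms, the resulting quadratic form $2S_n(k)\approx \frac{n}{k(n-k)}\bigl(U_k-\tfrac{k}{n}U_n\bigr)^T\Sigma^{-1}\bigl(U_k-\tfrac{k}{n}U_n\bigr)$ (which I checked and it is algebraically exact for the leading term), the strong approximation by a $d$-dimensional Brownian bridge (KMT is applicable here because, in the natural parametrization of an exponential family, $T(X_1)$ has exponential moments in a neighborhood of the origin), the logarithmic time change to a stationary Ornstein--Uhlenbeck process, and the factor $2$ in $\exp(-2e^{-t})$ coming from the two asymptotically independent tail windows $k/n\downarrow 0$ and $k/n\uparrow 1$. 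The one place your sketch is vague is exactly the place where Gombay--Horv\'ath's proof is technical: uniform control of the Taylor remainder and the truncation in the boundary regions where $n/(k(n-k))$ blows up --- you identify the obstacle correctly and sketch the standard two-scale resolution, but a full write-up would need the precise LIL estimates from \cite{GH94}. Given that the paper itself delegates this to the literature, your proposal is adequate and consistent with the paper's treatment.
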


We omit the proof. The statement can be found in \cite[Theorem 1.1]{GH96a}, whereas the proof in a more general setting is stated in \cite{GH94}. \par 
The above theorem states, under the null hypothesis $H_0$ when no change point occurs, that asymptotically the test statistic $\mathcal{S}_n^{1/2}$ follows a Gumbel distribution. This is not surprising as the Gumbel distribution describes the maximum (or minimum) of normally distributed data and therefore suggests that under the null hypothesis $(S_n^{1/2}(k), 1 \leq k \leq n)$ converges to a sequences of normally distributed random variables.\par 
With help of Theorem \ref{RCP:res:distH0}, we are able to derive rejection regions of the test statistic $\mathcal{S}_n := \max_{1\leq k \leq n} \{2S_n(k)\}$ under the null hypothesis. For example, let $m = 1,$ $d = 2,$ $n = 10,000,$ and consider different significance levels $\alpha = 0.1, \, 0.05, \, 0.01,$ where $1-\alpha = \exp(-2e^{-t})$ for appropriate $t\in \R.$ Then, we should reject the null hypothesis if $\mathcal{S}_n^{1/2}$ is larger than the corresponding critical value $\kappa_{\alpha}$ calculated from equation $\Pro[\mathcal{S}_n^{1/2} > \kappa_{\alpha}] = 1-\exp(-2e^{-t}) = \alpha$ and the distribution in Theorem \ref{RCP:res:distH0}. In the table below, we have presented the critical values $\kappa_{\alpha}$ for different values of $\alpha.$

\begin{figure}[H]
    \centering
\begin{tabular}{c|c}
    $\alpha$ & $\kappa_{\alpha}$  \\
    \hline
    0.1 & 3.8827 \\
    0.05 & 4.2242\\
    0.01 & 4.9977\\
\end{tabular}
\caption{Depiction of the critical values $\kappa_{\alpha}$ for different values of $\alpha \in \{0.1,\ 0.05,\ 0.01\}$}
\label{RCP:tab:critValues}
\end{figure}

The rate of convergence to the Gumbel distribution in Theorem \ref{RCP:res:distH0} is usually believed to be very slow. Consequently, a very large sample is necessary to test $H_0$ versus $H_1$ with help of Theorem \ref{RCP:res:distH0}. In a simulation study, the authors in \cite{CH97} showed that for a moderate sample size the critical values derived from Theorem \ref{RCP:res:distH0} tend to be much larger than the true ones and therefore, the distribution in Theorem \ref{RCP:res:distH0} yields conservative rejection regions. For this reason, the authors in \cite{CH97} state a second limit theorem for the distribution of the test statistic $\mathcal{S}_n$ under the null hypothesis. In this second limit result, it is shown that the distribution of the test statistic can be approximated by that of the supremum of the continuous-time process $(B^{(d)}(t)/(t(1-t)))_{t\in (0,1)}$ taken over a slightly shorter time interval, where $B^{(d)}(t) := \sum_{1\leq i \leq d} B^2_i(t)$ and $B_1,\cdots, B_d$ are independent Brownian bridges (c.f. Theorem 1.3.2 in \cite{CH97}). Moreover, they showed that the critical values obtained from the distribution in Theorem 1.3.2 in \cite{CH97} are often preferable to the ones obtained from the distribution in Theorem \ref{RCP:res:distH0}.

\section{Asymptotics under the alternative}

Assume that $H_1$ ``there exists one change point'' holds true and let us denote by $\theta^{(1)}_0, \theta^{(2)}_0 \in \Theta \subset \R^d$ the true values of the parameters before and after the change point of $X_1,\cdots, X_n$. Let $(X_{1,i}, i \geq 1)$ and $(X_{2,i}, i \geq 1)$ be two independent sequences of iid random variables, where $X_{1,1} \sim f(x; \theta^{(1)}_0)$ and  $X_{2,1} \sim f(x; \theta^{(2)}_0).$ Then, we have 
\[X_i = \begin{cases}
X_{1,i} \quad \text{ for } i = 1,\cdots, k_n^*\\
X_{2,i} \quad \text{ for } i = k^*_n+1, \cdots, n
\end{cases}.\]
Since the densities of the $X_i$'s are elements of an exponential family, we have
\begin{equation}\label{RCP:eq:meanvarianceT}
\begin{split}
\frac{\partial A(\theta_0^{(1)})}{\partial \theta_{j}} = \E[T_j(X_{1,1})], \quad \frac{\partial^2 A(\theta^{(1)}_0)}{\partial \theta_i \partial \theta_j} &= \Cov(T_i(X_{1,1}),T_j(X_{1,1})),\\
\frac{\partial A(\theta_0^{(2)})}{\partial \theta_j} = \E[T_j(X_{2,1})], \quad \frac{\partial^2 A(\theta^{(2)}_0)}{\partial \theta_i \partial \theta_j} &= \Cov(T_i(X_{2,1}), T_j(X_{2,1})),
\end{split}
\end{equation}
where $\theta = (\theta_{1}, \cdots, \theta_d)^T \in \Theta \subset \R^d.$ In the following, we introduce by 
\begin{align}\label{RCP:def:tau}
\tau_1 := A'(\theta^{(1)}_0),\quad \tau_2 := A'(\theta^{(2)}_0),\quad \Sigma_1 := A''(\theta^{(1)}_0), \quad \text{and} \quad \Sigma_2 := A''(\theta^{(2)}_0).
\end{align}
In order to study the asymptotics of the statistic $\mathcal{S}_n$ for a possibly random occurrence of a change point $k^*_n,$ we will study its asymptotics for all possible true values of $k^*_n.$ Therefore, under $H_1,$ we can rewrite the test statistic $S_n$ in \eqref{RCP:def:testStat} as a discrete-time process of two time parameters $k, k^* \in \{1,\cdots, n-1\},$ i.e.,
\begin{equation}\label{RCP:def:testStatH1}
    S_n(k, k^*) = kH(B_n(k,k^*)) + (n-k)H(B^*_n(k,k^*)) - nH(B_n(n,k^*)),
\end{equation}
where $H$ is given as in \eqref{RCP:def:H},
\[B_n(k,k^*) = \begin{cases}
    \frac{1}{k}\sum_{i = 1}^k T(X_{1,i}) &\quad \text{ if } k \leq k^*\\
    \frac{1}{k}\left(\sum_{1=1}^{k^*}T(X_{1,i}) + \sum_{i = k^* +1}^kT(X_{2,i})\right) &\quad \text{ if } k > k^*
\end{cases},\]
and
\[B^*_n(k,k^*) = \begin{cases}
    \frac{1}{n-k}\left(\sum_{i = k+1}^{k^*} T(X_{1,i}) + \sum_{i = k^* +1}^n T(X_{2,i})\right) &\quad \text{ if } k\leq k^*\\
    \frac{1}{n-k}\sum_{i = k+1}^n T(X_{2,i}) &\quad \text{ if } k > k^*
\end{cases}.\]

In the following, we will study the limit distribution of $S_n$. It turns out that its distribution depends on the limit distribution of $k^*_n/n$ and on the size of  the change 
\begin{align}\label{RCP:def:DeltaSquared}
    \Delta^2 := \|\theta^{(1)}_0-\theta^{(2)}_0\|^2.
\end{align}

\begin{Ass}\label{RCP:ass:cases}
    Let the true location of the change point $k^*_n$ be a random variable taking values in $\{1,\cdots, n-1\}.$ Let $\gamma \in (0,1/2)$ be a constant and $\lambda^*$ be a random variable taking values in $[\gamma,1-\gamma]$ with probability one such that $k^*_n/n \Rightarrow \lambda^*.$ Moreover, we assume that $\theta^{(1)}_0 := \theta^{(1)}_0(n) \rightarrow \theta_A$ and $\theta^{(2)}_0 := \theta^{(2)}_0(n) \rightarrow \theta_A$ for some $\theta_A$ in the interior of $\Theta \subset \R^d$ with
        \begin{equation}\label{RCP:ass:sizeCP}
        \lim_{n\rightarrow \infty} n \Delta^2 = \infty,
        \end{equation}
    where the size of the change $\Delta^2$ is given in \eqref{RCP:def:DeltaSquared}.
\end{Ass}

Assuming that $k^*_n/n \Rightarrow \lambda^*$ for some random variable $\lambda^*$ taking values almost surely in $[\gamma, 1-\gamma],$ for $\gamma \in (0,1/2),$ ensures that the change point occurs in ``the middle of the data''. Moreover, we concentrate in the following on the critical case in which the size of the change point $\Delta^2$ converges to zero as $n\rightarrow \infty.$  We will show that if $\Delta^2$ ensures the condition in \eqref{RCP:ass:sizeCP}, we are still able to detect the change point in the data. 

\begin{Rmk}
    Csörg\H{o} and Horváth \cite{CH97} studied this problem provided that $k^*_n$ is deterministic and $k^*_n/n \rightarrow \lambda \in (0,1).$ Moreover, they studied slight modifications of Assumption \ref{RCP:ass:cases}, e.g.,
        \begin{itemize}
            \item[i)] the occurrence of an early change point, i.e., $k^*_n/n \rightarrow 0,$ and
            \item[ii)] the size of the change point is large compared to the sample size in the sense that $\Delta^2$ is independent of $n.$
        \end{itemize}
        Combining our subsequent analysis with the arguments in \cite{CH97}, we expect to derive similar results, also in these settings.
\end{Rmk}

\begin{Rmk}
    If Assumption \ref{RCP:ass:cases} is satisfied, the true values of the parameters $\theta^{(1)}_0,$ $\theta^{(2)}_0,$ and hence also the true values of the parameters $\tau_1,$ $\tau_2,$ $\Sigma_1,$ $\Sigma_2,$ and $\Delta^2$ introduced in \eqref{RCP:def:tau} and \eqref{RCP:def:DeltaSquared} depend on $n.$ However, for reasons of notation, this dependence will often be omitted.
\end{Rmk}

Since the sequences $(X_{1,i}, i\geq 1)$ and $(X_{2,i}, i\geq 1)$ contain independent and identically distributed random variables, we can state our first limit theorem. It is a direct consequence of Donsker's theorem in higher dimensions.

\begin{Lem}\label{RCP:res:clt1}
Let Assumption \ref{RCP:ass:reg} and \ref{RCP:ass:cases} be satisfied and $\theta_A \in \Theta_0.$ Moreover, we define by $W^{(n)}_l := (W^{(n)}_l(t))_{t\in [0,1]}$, where $W^{(n)}_l(t) := \sum_{k=1}^{n}W^{(n)}_{l,k}\1_{\{nt \in [k, k+1)\}},$ $W^{(n)}_{l,k} := n^{-1/2} \sum_{i=1}^k (T(X_{l,i})-\tau_l),$ and $l = 1,2.$ Then, for each $l\in \{1,2\},$ we have 
\[W^{(n)}_l \Rightarrow \Sigma_A^{1/2}W\]
in the Skorokhod topology on $D([0,1],\R^d)$, where $\Sigma_A := A''(\theta_A).$ Here, $W$ denotes a $d$-dimensional standard Brownian motion.
\end{Lem}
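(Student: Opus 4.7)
The strategy is to view $W_l^{(n)}$ as the partial-sum step process of a row-wise iid triangular array of centered $\R^d$-valued random vectors and to invoke a multivariate functional central limit theorem for such arrays. Fix $l \in \{1,2\}.$ For each $n,$ the random variables $\{T(X_{l,i}) - \tau_l(n)\}_{i=1,\dots,n}$ are iid with mean zero and covariance matrix $\Sigma_l(n) := A''(\theta_0^{(l)}(n)),$ by the standard exponential-family identities recalled in \eqref{RCP:eq:meanvarianceT}. The only subtlety compared to the classical Donsker theorem is that the common distribution of these summands depends on $n$ through $\theta_0^{(l)}(n).$

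First I would check convergence of the row covariances: Assumption \ref{RCP:ass:cases} ensures $\theta_0^{(l)}(n) \to \theta_A,$ and Assumption \ref{RCP:ass:reg} i) gives continuity of $A''$ at $\theta_A \in \Theta_0,$ hence $\Sigma_l(n) \to \Sigma_A.$ Next I would verify the multivariate Lindeberg condition
\begin{equation*}
\frac{1}{n}\sum_{i=1}^{n} \E\bigl[\|T(X_{l,i})-\tau_l(n)\|^2 \1_{\{\|T(X_{l,i})-\tau_l(n)\| > \varepsilon\sqrt{n}\}}\bigr] \longrightarrow 0
\end{equation*}
for every $\varepsilon > 0.$ Since the rows are iid, this reduces to uniform integrability in $n$ of $\|T(X_{l,1}) - \tau_l(n)\|^2.$

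The key auxiliary ingredient, and the main obstacle, is an $n$-uniform moment bound on $T.$ Because $\theta_A$ lies in the open set $\Theta_0$ on which $A$ is smooth, it is in particular interior to the natural parameter space of the exponential family, so on some compact neighborhood $K \subset \Theta_0$ of $\theta_A$ the moment generating function $\theta \mapsto \E_{\theta}[\exp(\delta \|T(X)\|)]$ is finite and continuous for a sufficiently small $\delta > 0.$ Since $\theta_0^{(l)}(n) \in K$ eventually, one obtains $\sup_n \E[\|T(X_{l,1})\|^{r}] < \infty$ for every $r \geq 1,$ and hence $\sup_n \E[\|T(X_{l,1})-\tau_l(n)\|^{2+\eta}] < \infty$ for any $\eta > 0,$ which by de la Vall\'ee-Poussin gives the uniform integrability of the squared norms and thus the Lindeberg condition.

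With row-wise zero mean, row covariances converging to $\Sigma_A,$ and the Lindeberg condition in hand, the multivariate functional Lindeberg-Feller CLT for triangular arrays (see, e.g., Jacod and Shiryaev) applies to the càdlàg step process $W_l^{(n)}$ and yields convergence in the Skorokhod topology on $D([0,1],\R^d)$ to the centered continuous Gaussian process with independent increments and covariance $t\,\Sigma_A$ at time $t,$ which is precisely $\Sigma_A^{1/2}W.$ The replacement of $\lfloor nt\rfloor/n$ by $t$ in the limit is harmless because the step-process and the linear interpolation differ by at most one summand, whose norm is $o_{\Pro}(1)$ uniformly in $t$ under the uniform moment bound just established.
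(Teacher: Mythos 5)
Your proof is correct and, in fact, more careful than the one-line argument the paper gives. The paper simply cites ``Donsker's theorem in higher dimension applied to the iid sequences $(T(X_{l,i})-\tau_l, i\geq 1)$,'' silently suppressing the fact that under Assumption~\ref{RCP:ass:cases} the law of $T(X_{l,i})$ and the centering $\tau_l=\tau_l(n)$ both move with $n$, so that one really has a row-wise iid triangular array rather than a single iid sequence. You identify this exactly, and the three ingredients you supply are the right ones: (i) convergence of the row covariances $A''(\theta_0^{(l)}(n)) \to A''(\theta_A)=\Sigma_A$ via Assumption~\ref{RCP:ass:reg}~i); (ii) the multivariate Lindeberg condition reduced, by row-wise iid-ness, to uniform integrability of $\|T(X_{l,1})-\tau_l(n)\|^2$ over $n$; and (iii) the $n$-uniform moment bound obtained from the local finiteness and continuity of the moment generating function $\theta\mapsto \E_\theta[\exp(s^T T(X))]=\exp(A(\theta+s)-A(\theta))$ on a compact neighborhood of $\theta_A$ inside $\Theta_0$, which yields $\sup_n\E[\|T(X_{l,1})\|^{2+\eta}]<\infty$ and hence the required uniform integrability by de~la~Vall\'ee-Poussin. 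The concluding appeal to the functional Lindeberg--Feller theorem for triangular arrays, together with the observation that the difference between the step process at $\lfloor nt\rfloor/n$ and at $t$ is a single $o_\Pro(1)$ summand, is likewise correct. In short, you and the paper use the same functional-CLT machinery, but your version closes the gap the paper leaves open by treating the parameter drift $\theta_0^{(l)}(n)\to\theta_A$ explicitly; the paper's statement is true for exactly the reasons you give.
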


\begin{proof}
    This is a direct application of Donsker's theorem in higher dimension to the iid sequences $(T(X_{1,i})-\tau_1,\, i \geq 1)$ or $(T(X_{2,i}) - \tau_2,\, i \geq 1).$
\end{proof}

Next, for all $k,k^* \in \{1,\cdots, n-1\},$ let us introduce
\begin{equation}\label{RCP:def:asyMean}
\mu_n(k,k^*) := \begin{cases}
    kH(\tau_1) + (n-k)H\left(\frac{k^*-k}{n-k} \tau_1 + \frac{n-k^*}{n-k}\tau_2\right)\\
    \hspace{3cm} - nH\left(\frac{k^*}{n}\tau_1 + \frac{n-k^*}{n}\tau_2\right) &\quad \text{ if } k \leq k^*\\
    kH\left(\frac{k^*}{k}\tau_1 + \frac{k-k^*}{k}\tau_2\right) + (n-k)H(\tau_2)\\
    \hspace{3cm} - nH\left(\frac{k^*}{n}\tau_1 + \frac{n-k^*}{n}\tau_2\right) &\quad \text{ if } k> k^*
\end{cases}.
\end{equation}
It turns out that $\mu_n(k,k^*)$ is the expected value of the statistic $S_n(k,k^*)$. Then, for all $k, k^* \in \{1,\cdots, n-1\},$ applying Taylor's formula of the first order, we can write
\begin{equation}\label{RCP:eq:testStatH1-centered}
S_n(k,k^*) - \mu_n(k,k^*) = Z_n(k, k^*) + R_n(k,k^*),
\end{equation}
where for $h_n(x) := H'(x\tau_1(n) + (1-x)\tau_2(n))^T,$ $k \leq k^*,$
\begin{equation}\label{RCP:eq:defZn1}
    \begin{split}
        Z_n(k,k^*) &:= h_n(1) \sum_{i=1}^k (T(X_{1,i}) - \tau_1)\\
            &\quad + h_n\left(\frac{k^*-k}{n-k}\right) \left(\sum_{i = k+1}^{k^*} (T(X_{1,i}) - \tau_1) + \sum_{i=k^*+1}^n (T(X_{2,i}) - \tau_2)\right)\\
            &\quad - h_n\left(\frac{k^*}{n}\right) \left(\sum_{i=1}^{k^*} (T(X_{1,i}) -\tau_1) + \sum_{i = k^*+1}^n (T(X_{2,i}) - \tau_2)\right),
    \end{split}
\end{equation}
for $k > k^*,$
\begin{equation}\label{RCP:eq:defZn2}
    \begin{split}
        Z_n(k,k^*)&:=h_n\left(\frac{k^*}{k}\right) \left(\sum_{i=1}^{k^*}(T(X_{1,i}) - \tau_1) + \sum_{i=k^*+1}^k (T(X_{2,i}) -\tau_2)\right)\\
        &\quad + h_n(0) \sum_{i=k+1}^n (T(X_{2,i}) -\tau_2)\\
        &\quad -h_n\left(\frac{k^*}{n}\right) \left(\sum_{i=1}^{k^*} (T(X_{1,i}) - \tau_1) + \sum_{i = k^* +1}^n(T(X_{2,i}) - \tau_2)\right),
    \end{split}
\end{equation}
and $R_n(k,k^*)$ is the corresponding remainder of Lagrange form such that the equation in \eqref{RCP:eq:testStatH1-centered} holds true. Later, we will see that under the appropriate rescaling such that the piecewise constant interpolation of $(Z_n(k,k^*); k,k^* \in \{1,\cdots, n-1\})$ converges weakly in the Skorokhod topology to a non-trivial stochastic limit process, the piecewise constant interpolation of $(R_n(k,k^*); k,k^* \in \{1,\cdots, n-1\})$ will vanish in probability as $n\rightarrow \infty$. With a little abuse of notation, we define by $Z_n := (Z_n(t,\lambda))_{t,\lambda \in [0,1]}$ the piecewise constant interpolation of $(Z_n(k,k^*); k,k^* \in \{1,\cdots, n-1\}),$ where
\[Z_n(t,\lambda) := \sum_{k=1}^{n-1}\sum_{k^*=1}^{n-1} Z_n(k,k^*) \1_{\{nt \in [k, k+1)\}}\1_{\{n\lambda \in [k^*, k^*+1)\}}.\]
Similarly, we define $\mu_n := (\mu_n(t,\lambda))_{t,\lambda \in [0,1]}$ and $R_n := (R_n(t,\lambda))_{t,\lambda \in [0,1]}.$

In the following, we assume for simplicity that $f(x,\theta)$ is the density of an $m$-dimensional normal distribution given in its natural parametrization, i.e., for mean $\mu \in \R^m$ and covariance matrix $\Sigma \in \R^{m\times m}$ symmetric and positive definite, let $\theta := \theta(\mu, \Sigma) = (\Sigma^{-1}\mu, -\frac{1}{2}\Sigma^{-1}) =: (\theta_1, \theta_2) \in \R^m \times \R^{m\times m}.$ Then, the density in \eqref{RCP:ass:density} is given by
\[f(x;\theta) = \exp\left(\theta^T_1 x + x^T \theta_2 x + \frac{1}{4}\theta^T_1 \theta^{-1}_2 \theta_1 - \frac{1}{2}\log(\det(-\pi\theta^{-1}_2))\right),\]
and therefore $T(x) = (x, xx^T),$ $A(\theta) = -\frac{1}{4}\theta^T_1 \theta^{-1}_2 \theta_1 + \frac{1}{2}\log(\det(-\pi\theta_2^{-1}))$ for $x\in \R^m,$ $\theta = (\theta_1, \theta_2) \in \R^d = R^{m+m^2},$ and $H(y) = -\frac{1}{2}\det(2\pi(y_2-y_1y_1^T)),$ for $y = (y_1, y_2) \in \R^m \times \R^{m\times m}.$

\begin{Rmk}\label{RCP:rmk:derivativesNormal}
    Since $f(x, \theta)$ is the density of an $m$-dimensional normal distribution, we have $d = m+m^2.$ Moreover, Assumption \ref{RCP:ass:reg} is satisfied.
\end{Rmk}

Assuming that $f(x,\theta)$ is the density of an $m$-dimensional normal distribution will remarkably simplify the proof of the following limit theorem as the identification of the finite-dimensional distributions can be derived by the fourth moment theorem (cf. Nualart and Peccati \cite[Theorem 1]{NP05}).

\begin{The}[A limit theorem for $Z_n$ under the alternative]\label{RCP:res:limitZn1}
Let Assumption \ref{RCP:ass:cases} be satisfied and assume that $f(x, \theta)$ is the density of an $m$-dimensional normal distribution given in its natural parametrization. Let us denote by $\delta^2 := \delta^2(n) := \|\tau_1(n) - \tau_2(n)\|^2\rightarrow 0$ as $n\rightarrow \infty$ and by $\tau_A := \lim_{n\rightarrow \infty} \tau_1(n).$ Then, we have
\[(n\delta^2)^{-1/2}Z_n \Rightarrow Z^*\]
in the Skorokhod topology on $D([0,1]^2, \R)$, where $Z^*$ is a Gaussian process with mean zero and covariance function
\[c((t,\lambda),(t',\lambda')) = \sigma^2_A \begin{cases}
(1-\lambda)(1-\lambda') \min\left\{\frac{t}{1-t}, \frac{t'}{1-t'}\right\}, & \text{ if } t \leq \lambda, t' \leq \lambda'\\
(1-\lambda)\lambda'\min\left\{\frac{t(1-t')}{(1-t)t'}, 1\right\}, & \text{ if } t \leq \lambda, t' > \lambda'\\
\lambda(1-\lambda') \min\left\{\frac{(1-t)t'}{t(1-t')}, 1\right\}, & \text{ if } t > \lambda, t' \leq \lambda'\\
\lambda \lambda' \min\left\{\frac{1-t}{t}, \frac{1-t'}{t'}\right\}, & \text{ if } t > \lambda, t' > \lambda'
\end{cases},\]
for $((t, \lambda), (t',\lambda')) \in [0,1]^2\times [0,1]^2 \setminus \{((1,1), (1,1))\}$ and $c((1,1),(1,1)) = 0,$ and $\sigma_A^2$ given by
\[\sigma^2_A := \lim_{n\rightarrow \infty} \sigma^2_A(n) := \lim_{n\rightarrow \infty} \frac{(\tau_1(n) - \tau_2(n))^TH''(\tau_A) (\tau_1(n) - \tau_2(n))}{\|\tau_1(n) -\tau_2(n)\|^2}\]
provided that the limit on the right hand-side exists. Otherwise, we simply scale the left hand-side appropriately with $\sigma_A^2(n).$
\end{The}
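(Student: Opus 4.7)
The plan is to prove $Y_n := (n\delta^2)^{-1/2} Z_n \Rightarrow Z^*$ by establishing convergence of the finite-dimensional distributions together with tightness in $D([0,1]^2,\R)$. First I would linearise the coefficients: since $h_n$ is continuously differentiable with derivative $(\tau_1-\tau_2)^T H''(\cdot)$, a Taylor expansion gives $h_n(\alpha) - h_n(\beta) = (\alpha - \beta)(\tau_1-\tau_2)^T H''(\tau_A) + o(\delta)$ uniformly in $\alpha,\beta\in[0,1]$, so every occurrence of $h_n(\cdot)$ in \eqref{RCP:eq:defZn1}--\eqref{RCP:eq:defZn2} can be rewritten so that $(\tau_1-\tau_2)$ is factored out to the left. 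This reduces $Z_n(k,k^*)$ to $(\tau_1-\tau_2)^T H''(\tau_A)$ applied to a linear functional of the partial sums $U_k := \sum_{i \leq k}(T(X_{1,i}) - \tau_1)$ and $V_j := \sum_{i \leq j}(T(X_{2,i})-\tau_2)$, plus a residual that combines with $R_n$ from \eqref{RCP:eq:testStatH1-centered} and which, via Lemma \ref{RCP:res:clt1} to uniformly control these partial sums, is $o_\Pro((n\delta^2)^{1/2})$ in the sup norm over $(k,k^*)$.

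Next I would compute $\Cov(Y_n(t,\lambda), Y_n(t',\lambda'))$ on the linearised expression by case analysis over the four regimes determined by the signs of $t - \lambda$ and $t' - \lambda'$. Independence of the $X_{l,i}$'s collapses each cross-term into an overlap count of index sets, while the scalar prefactor simplifies via $(\tau_1-\tau_2)^T H''(\tau_A) \Sigma_l H''(\tau_A)(\tau_1-\tau_2)$. Since $\Sigma_l \to \Sigma_A = A''(\theta_A) = H''(\tau_A)^{-1}$ by Remark \ref{RCP:rmk:derivativeH}, this simplifies to $\delta^2 \sigma_A^2(n)$, which together with the normalisation $(n\delta^2)^{-1}$ produces the scalar $\sigma_A^2$, while the geometric overlap counts converge to the piecewise-rational factors in $c((t,\lambda),(t',\lambda'))$ exactly as stated.

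For the finite-dimensional distributions, by the Cram\'er--Wold device it suffices to show that any linear combination $\xi_n := \sum_j \alpha_j Y_n(t_j, \lambda_j)$ converges to a Gaussian with the variance prescribed by the covariance above. Because $f(\cdot;\theta)$ is Gaussian and $T(x) = (x, xx^T)$, writing $X_{l,i} = \mu_l + \Sigma_l^{1/2}\eta_{l,i}$ shows that every component of $T(X_{l,i}) - \tau_l$ lies in the direct sum of the first two Wiener chaoses of the input $(\eta_{l,i})$; so does $\xi_n$. I would decompose $\xi_n = \xi_n^{(1)} + \xi_n^{(2)}$ along these chaoses, note that $\xi_n^{(1)}$ is exactly Gaussian for every $n$, and apply the fourth moment theorem of Nualart and Peccati \cite{NP05} to $\xi_n^{(2)}$: since its variance converges by the previous step, it suffices to verify $\E[(\xi_n^{(2)})^4] \to 3(\lim \Var \xi_n^{(2)})^2$, which follows from the product formula for Wiener chaos applied to the explicit kernels coming from $\eta\eta^T - I$. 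The multivariate Peccati--Tudor criterion then yields joint Gaussianity of $(\xi_n^{(1)},\xi_n^{(2)})$ and hence of $\xi_n$.

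For tightness on $D([0,1]^2,\R)$, I would invoke a Chentsov-type moment criterion on rectangular increments of $Y_n$. Because $Y_n$ lives in the fixed, finite direct sum of the first two Wiener chaoses, hypercontractivity bounds all $L^p$-moments by $L^2$-moments, so the required H\"older-type bound reduces to the covariance estimates already obtained in the second step. The main obstacle I expect is the fourth moment verification: the chaos contributions from $U_k$ and $V_j$ come from the same underlying Gaussian vectors across eight partial sums and four case distinctions, so the contraction combinatorics coming from the matrix entries of $XX^T$ must be tracked carefully, while a subsidiary technical point is that the Taylor coefficients have corners at $t = \lambda$ requiring uniform control of the Lagrange remainder in a neighbourhood of that diagonal.
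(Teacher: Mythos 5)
Your plan is correct and shares the paper's skeleton — Cram\'er--Wold plus the fourth moment theorem for the finite-dimensional distributions, together with a separate tightness argument, and the linearisation of the $h_n$-coefficients in your first step is exactly \eqref{RCP:eq:convGn} — but it diverges from the paper in two places. (i) For tightness you propose a Chentsov-type criterion on rectangular increments, using hypercontractivity to bound $L^p$-increments by $L^2$-increments; the paper instead writes $\tilde{Z}_n$ as a fixed finite sum of scalar products of the partial-sum processes $W^{(n)}_l$ with the deterministic vectors $G_n(\cdot,\cdot)$ converging by \eqref{RCP:eq:convGn}, notes via Lemma~\ref{RCP:res:clt1} that each summand converges in Skorokhod to a continuous two-parameter Gaussian process, and concludes by Whitt's convergence-preservation theorem (Theorem~12.6.1 in \cite{W02}). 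This avoids any direct moment estimate for the two-parameter process and, in particular, sidesteps the kink along the diagonal $t=\lambda$ that you correctly flag as a technical risk of the moment route. (ii) For the chaos structure, you split each Cram\'er--Wold combination into its first-chaos component, which is exactly Gaussian for every $n$, and its second-chaos component, apply Nualart--Peccati only to the latter, and then invoke the multivariate Peccati--Tudor criterion for joint Gaussianity. The paper does not decompose: it asserts that $\hat{Z}_n$ ``belongs to the Wiener chaos of order two'' and verifies $\Var\hat{Z}_n\to\hat{c}$ and $\E[\hat{Z}_n^4]\to 3\hat{c}^2$ directly, computing the relevant mixed fourth moments through the combinatorial Lemma~\ref{RCP:res:4thMomentIIDN}. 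Since $\hat{Z}_n$ in fact lives in the direct sum of the first two chaoses rather than a single chaos, the paper's invocation of \cite[Theorem 1]{NP05} implicitly rests on the extension of the fourth moment theorem to finite sums of chaoses; your route is more scrupulous on this point, at the cost of having to control the fourth moment of the pure second-chaos part and its decoupling from the first-chaos part separately rather than the fourth moment of $\hat{Z}_n$ as a whole. Both proofs require the same piecewise covariance computation across the four regimes in $t\lessgtr\lambda$, $t'\lessgtr\lambda'$.
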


\begin{Cor}
    Let the assumptions of Theorem \ref{RCP:res:limitZn1} be satisfied and assume that $\sigma^2_A$ introduced in Theorem \ref{RCP:res:limitZn1} exists. Then, we have 
        \[\left(\frac{Z_n(t,t)}{\sqrt{n\delta^2}}\right)_{t \in [0,1]} \Rightarrow \sigma_A B\]
    in the Skorokhod topology on the space $D([0,1], \R)$, where $B$ is a one-dimensional Brownian bridge, i.e., $B$ is a Gaussian process with mean zero and covariance function $c(t,t') = \min \{t, t'\} - tt'$.
\end{Cor}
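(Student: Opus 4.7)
The plan is to deduce the corollary from Theorem \ref{RCP:res:limitZn1} by specialising the two-parameter weak limit to the diagonal $\lambda=t$. Two ingredients are required: (i) the limit $Z^*$ admits a version with continuous sample paths on $[0,1]^2$, and (ii) the diagonal-restriction map $\Phi:D([0,1]^2,\R)\to D([0,1],\R)$, $\Phi(f)(t):=f(t,t)$, is continuous (in the two-parameter Skorokhod topology) at functions whose restriction to the diagonal is continuous. Granted these, the continuous mapping theorem together with Theorem \ref{RCP:res:limitZn1} immediately yields
\[
    \left(\frac{Z_n(t,t)}{\sqrt{n\delta^2}}\right)_{t\in[0,1]}\;\Rightarrow\;\bigl(Z^*(t,t)\bigr)_{t\in[0,1]}
\]
in $D([0,1],\R)$, and the remaining work is to identify this limit with $\sigma_A B$.

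For (i), I would observe that the covariance $c$ displayed in Theorem \ref{RCP:res:limitZn1} is bounded on $[0,1]^2\times[0,1]^2$, jointly continuous off the corner $((1,1),(1,1))$ (one checks directly that the four case-expressions agree on their common boundaries), and vanishes as $(t,\lambda),(t',\lambda')\to(1,1)$. An application of the Kolmogorov--Chentsov criterion to the centred Gaussian process $Z^*$ then provides a version with a.s.\ continuous sample paths on $[0,1]^2$. Point (ii) is a standard fact for the two-parameter Skorokhod space: at any $f$ continuous on the diagonal the restriction $\Phi(f)$ is recovered as a uniform limit on the diagonal, so $\Phi$ is continuous at such $f$.

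It then remains to identify the law of $(Z^*(t,t))_{t\in[0,1]}$. Being centred and Gaussian, its distribution is determined by its covariance. For $t\leq t'$, setting $\lambda=t$ and $\lambda'=t'$ satisfies $t\leq\lambda$ and $t'\leq\lambda'$, so we are in the first case of the covariance formula in Theorem \ref{RCP:res:limitZn1}; since $t\leq t'$ forces $t/(1-t)\leq t'/(1-t')$, a one-line simplification gives
\[
    \Cov\bigl(Z^*(t,t),Z^*(t',t')\bigr)=\sigma_A^2(1-t)(1-t')\,\frac{t}{1-t}=\sigma_A^2\,t(1-t')=\sigma_A^2\bigl(\min\{t,t'\}-tt'\bigr),
\]
which is $\sigma_A^2$ times the Brownian bridge covariance. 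Hence $(Z^*(t,t))_{t\in[0,1]}\stackrel{d}{=}\sigma_A B$, proving the claim. The only real obstacle is the bookkeeping of step (i) -- checking continuity of $c$ across the case boundaries and invoking Kolmogorov--Chentsov -- since both the continuous mapping step and the final covariance identification are essentially one-line computations.
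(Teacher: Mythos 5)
Your proposal is correct and follows the natural route that makes this an immediate corollary of Theorem~\ref{RCP:res:limitZn1}: restrict the two-parameter limit to the diagonal $\lambda=t$, use continuous mapping, and match the resulting covariance $\sigma_A^2\,t(1-t')$ ($t\leq t'$) to that of $\sigma_A B$. One small remark: invoking Kolmogorov--Chentsov to establish continuity of $Z^*$ is superfluous here, since the tightness argument in the proof of Theorem~\ref{RCP:res:limitZn1} already exhibits $Z^*$ as a finite sum of jointly continuous Gaussian processes (each summand is a product of a converging deterministic factor with a Brownian-motion increment), so a continuous version is available from the theorem itself.
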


\begin{figure}[H]
    \centering
    \includegraphics[scale = 0.3]{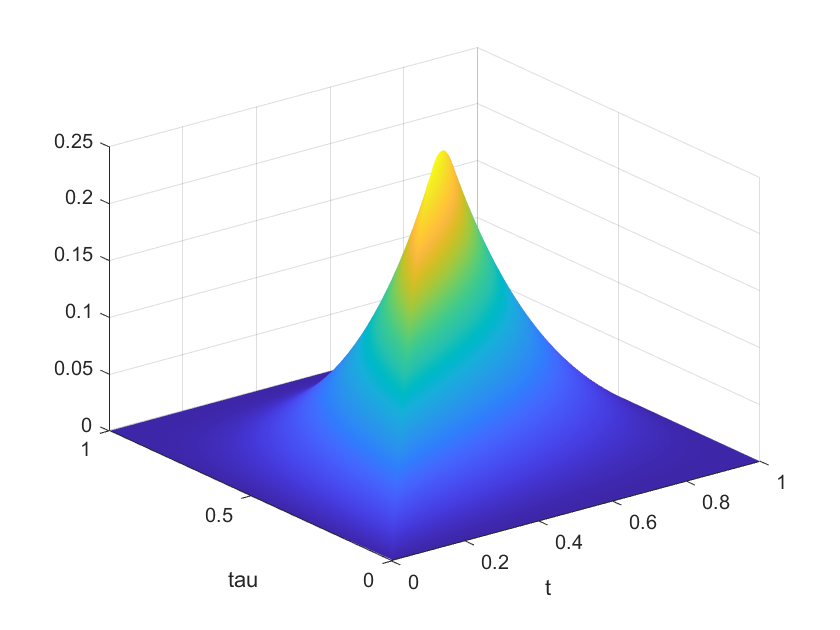}
    \caption{Depiction of the variance structure of the limit process derived in Theorem \ref{RCP:res:limitZn1}.}
    \label{RCP:fig:covZn1}
\end{figure}

\begin{proof}[Proof of Theorem \ref{RCP:res:limitZn1}]
    For $l = 1,2$, recall that $W_l^{(n)}(t) := \sum_{k=1}^nW_{l,k}^{(n)}\1_{\{nt \in [k,k+1)\}},$  where $W^{(n)}_{l,k} := n^{-1/2}\sum_{i =1}^k (T(X_{l,i}) - \tau_l)$ and let $\tilde{Z}_n(t,\lambda):= (n\delta^2)^{-1/2} Z_n(t,\lambda)$ for $t,\lambda \in [0,1].$ In the following, we assume the existence of the limit $\sigma^2_A$ given in Theorem \ref{RCP:res:limitZn1}. Otherwise, we simply study the process $\bar{Z}_n(t,\lambda) = (\sigma^2_A(n))^{-1} \tilde{Z}_n.$ \par 
    We will first establish tightness of the sequence $(\tilde{Z}_n)_{n\in \N}$ and then prove that their finite-dimensional distributions converge to those of $Z^*$.\newline 
    
    \textit{Tightness:} For $t,\lambda \in [0,1]$, let us introduce the short-hand notations \[\alpha_n(t,\lambda):= \frac{\lfloor n\lambda \rfloor - \lfloor nt \rfloor}{n-\lfloor nt \rfloor}\1_{\{t < \lambda\}}, \quad \beta_n(\lambda):= \frac{\lfloor n\lambda \rfloor}{n}, \quad \text{ and } \quad \gamma_n(t,\lambda):= \frac{\lfloor n\lambda \rfloor}{\lfloor nt \rfloor}\1_{\{t > \lambda\}}.\]
    As $n\rightarrow \infty,$ we have for each $t, \lambda \in [0,1],$ that $\alpha_n(t,\lambda) \rightarrow (\lambda - t)/(1-t)\1_{\{t< \lambda\}},$ $\beta_n(\lambda) \rightarrow \lambda,$ and $\gamma_n(t,\lambda) \rightarrow \lambda/t\1_{\{t > \lambda\}}.$ Now, for $t \leq \lambda$ and $\alpha_n:= \alpha_n(t,\lambda),$ $\beta_n := \beta_n(\lambda),$ we can write
    \begin{equation*}
        \begin{split}
          Z_n(t,\lambda) &= \left(h_n(1)-h_n(\beta_n)\right)n^{1/2} W^{(n)}_{1}(\lambda) + \left(h_n(\alpha_n) - h_n(1)\right)n^{1/2} \left(W^{(n)}_1(\lambda) - W^{(n)}_1(t)\right)\\
          &\qquad+ (h_n(\alpha_n)-h_n(\beta_n)) n^{1/2} \left(W^{(n)}_2(1) - W^{(n)}_2(\lambda)\right)
        \end{split}
    \end{equation*}
    and for $t > \lambda$ and $\gamma_n := \gamma_n(t,\lambda),$ 
    \begin{equation*}
        \begin{split}
          Z_n(t,\lambda) &= \left(h_n(\gamma_n)-h_n(\beta_n)\right) n^{1/2}W_1^{(n)}(\lambda) + \left(h_n(0) - h_n(\beta_n)\right) n^{1/2}\left(W^{(n)}_{2}(1) - W^{(n)}_2(\lambda)\right)\\
          &\qquad+ (h_n(\gamma_n)-h_n(0)) n^{1/2} \left(W^{(n)}_{2}(t) - W^{(n)}_2(\lambda)\right).
        \end{split}
    \end{equation*}
    Since $f(x,\theta)$ is the density of an $m$-dimensional normal distribution, we conclude $H$ has continuous derivatives up to the second order (cf. Remark \ref{RCP:rmk:derivativesNormal}) and by Assumption \ref{RCP:ass:cases}, $(\tau_1, \tau_2)\rightarrow (\tau_A, \tau_A)$ as $n\rightarrow \infty,$ where $\tau_A := A'(\theta_A).$ Hence, for all sequences $(x_n)_{n\in \N}, (y_n)_{n\in \N} \subset [0,1]$ with $x_n \rightarrow x \in [0,1],$ $y_n \rightarrow y \in [0,1]$ as $n\rightarrow \infty,$ and $x_n\leq y_n$ for all $n\in \N,$ we conclude by an application of the definition of the derivative in higher dimensions and Remark \ref{RCP:rmk:derivativeH} that
    \begin{equation}\label{RCP:eq:convGn}
        \begin{split}
            \frac{h_n(y_n) - h_n(x_n)}{\delta}
            &= \frac{\left(H'(y_n \tau_1 + (1-y_n)\tau_2) - H'(x_n \tau_1 + (1-x_n)\tau_2)\right)^T}{\|\tau_1 - \tau_2\|}\\
            &= (y_n - x_n)\frac{(\tau_1 - \tau_2)^T}{\|\tau_1 - \tau_2\|}H''(\tau_2) + o(1)\\
            &\rightarrow (y-x)\tilde{\Sigma}^{1/2}
        \end{split}
    \end{equation}
    as $n\rightarrow \infty,$ where $\tilde{\Sigma}^{1/2} := \lim_{n\rightarrow \infty} (\tau_1-\tau_2)^T H''(\tau_A)/\delta.$
    In the following, we denote by $G_n(x,y)=(h_n(x)-h_n(y))/\delta.$ Then, the process $\tilde{Z}_n$ can be written as follows: for $t\leq \lambda,$
    \begin{align*}
        \tilde{Z}_n(t,\lambda) &= G_n(1, \beta_n) W^{(n)}_1(\lambda) - G_n(1,\alpha_n) W^{(n)}_{1}(\lambda)  + G_n(1,\alpha_n) W^{(n)}_1(t)\\
        &\qquad - G_n(\beta_n,\alpha_n) W^{(n)}_2(1) + G_n(\beta_n, \alpha_n)W^{(n)}_2(\lambda),
    \end{align*}
    and for $t > \lambda,$
    \begin{align*}
        \tilde{Z}_n(t,\lambda)&= G_n(\gamma_n, \beta_n) W^{(n)}_1(\lambda) - G_n(\beta_n, 0) W^{(n)}_2(1) + G_n(\beta_n, 0) W^{(n)}_2(\lambda)\\
        &\quad+ G_n(\gamma_n, 0) W^{(n)}_2(t) - G_n(\gamma_n, 0) W^{(n)}_2(\lambda).
    \end{align*}
    Hence, the process $\tilde{Z}_n$ can be represented as a sum of five scalar products which are derived by multiplying the $d$-dimensional partial sums $W^{(n)}_l(t),$ $W^{(n)}_l(\lambda),$ or $W^{(n)}_l(1),$ $l = 1,2,$ with the non-random $d$-dimensional vector $(G_n \circ (x_n,y_n))(t,\lambda),$ where $x_n(t,\lambda),$ $y_n(t,\lambda) \in \{0, \alpha_n(t,\lambda), \beta_n(\lambda),$ $\gamma_n(t,\lambda), 1\}.$ Therefore, we can identify each summand of $\tilde{Z}_n$ as a discrete-time process in two time parameters which converges weakly in the Skorokhod topology to a one-dimensional Gaussian process thanks to Lemma \ref{RCP:res:clt1} and \eqref{RCP:eq:convGn}. In particular, the limit process of each summand of $\tilde{Z}_n$ is a continuous process in both time parameters. By Theorem 12.6.1 in Whitt \cite{W02}, we conclude their joint convergence implying that the sequence $(\tilde{Z}_n)_{n\in \N}$ is tight.
    
    \textit{Convergence of the finite-dimensional distributions:} It is left prove the convergence of the finite-dimensional distributions of $\tilde{Z}_n$, i.e., for all $k \geq 1$ and $(t_1, \lambda_1), \cdots, (t_k, \lambda_k) \in [0,1]^2,$ we want to show that
    \[(\tilde{Z}_n(t_1,\lambda_1),\cdots, \tilde{Z}_n(t_k, \lambda_k)) \Rightarrow (Z^*(t_1, \lambda_1),\cdots, Z^*(t_k, \lambda_k))\]
    as $n\rightarrow \infty,$ where $Z^*$ is a Gaussian process with mean zero and  covariance function $c: [0,1]^2 \times [0,1]^2 \rightarrow \R$ defined in Theorem \ref{RCP:res:limitZn1}. For sake of notation, we only analyze the case $k = 2,$ i.e., for $(t,\lambda), (t',\lambda') \in [0,1]^2,$ we will prove the joint convergence of 
    \begin{equation}\label{RCP:eq:jointConv}
    (\tilde{Z}_n(t,\lambda), \tilde{Z}_n(t',\lambda')) \Rightarrow (Z^*(t,\lambda), Z^*(t',\lambda'))
    \end{equation}
    as $n\rightarrow \infty$ and note that for $k>2$, we can argue completely analogously. Applying the Cramér-Wold device, \eqref{RCP:eq:jointConv} is equivalent to 
    \begin{equation}\label{RCP:eq:CWD}
        \hat{Z}_n((t,\lambda),(t',\lambda')) := x\tilde{Z}_n(t,\lambda) + y\tilde{Z}_n(t',\lambda') \Rightarrow xZ^*(t,\lambda) + yZ^*(t',\lambda'),
    \end{equation}
    where $x,y \in \R$ are arbitrary. Since the $X_i$'s are assumed to be normally distributed, the first $m$ components of $W^{(n)}_l$ belong to the Wiener chaos of order one and the last $m^2$ components of $W^{(n)}_l$ belong to the Wiener chaos of order two, for $l = 1,2$ and all $n\in \N$. Hence, $\hat{Z}_n$ belongs to the Wiener chaos of order two for all $n\in \N.$ In order to prove \eqref{RCP:eq:CWD}, we will apply the fourth moment theorem by Nualart and Peccati \cite{NP05}. According to the fourth moment theorem, for all $(t,\lambda), (t',\lambda') \in [0,1]^2,$ the convergence in \eqref{RCP:eq:CWD} is satisfied if the following two conditions hold true: as $n\rightarrow \infty,$ we have 
    \begin{itemize}
        \item[i)] $\Var[\hat{Z}_n((t,\lambda),(t',\lambda'))] \rightarrow \hat{c}((t,\lambda),(t',\lambda'))$ and
        \item[ii)] $\E[\hat{Z}^4_n((t,\lambda),(t',\lambda'))] \rightarrow 3\left(\hat{c}((t,\lambda),(t',\lambda'))\right)^2,$
    \end{itemize}
    where 
    \[\hat{c}((t,\lambda), (t',\lambda')) = x^2c((t,\lambda),(t,\lambda)) + 2xy\, c((t,\lambda),(t',\lambda')) + y^2c((t',\lambda'),(t',\lambda'))\]
    and the covariance function $c: [0,1]^2\times [0,1]^2 \rightarrow \R$ defined in Theorem \ref{RCP:res:limitZn1}.
    Since $\E[T(X_{l,1})] = \tau_l$ for $l = 1,2,$ we conclude that $\E[\hat{Z}_n((t,\lambda),(t',\lambda'))] = 0.$ In order to analyze the second and fourth moments of $\hat{Z}_n((t,\lambda),(t',\lambda'))$, we need to differentiate between the following four cases:
    \[\text{1) } t\leq \lambda, \, t' \leq \lambda', \quad \text{2) } t \leq \lambda, \, t' > \lambda',\quad \text{3) } t > \lambda, \,t' \leq \lambda',\quad \text{and} \quad \text{4) } t> \lambda, \, t' > \lambda'.\] 
    First, let 1) $((t,\lambda), (t',\lambda')) \in [0,1]^2\times [0,1]^2 \setminus \{((1,1),  (1,1))\}$ with $t \leq \lambda,$ $t' \leq \lambda'$ and denote by $\alpha_n := \alpha_n(t, \lambda),$ $\alpha'_n := \alpha_n(t',\lambda'),$ $\beta_n := \beta_n(\lambda),$ and $\beta'_n:= \beta_n(\lambda_n).$ Then, for $\sigma^2_A$ defined in Theorem \ref{RCP:res:limitZn1}, applying Lemma \ref{RCP:res:clt1} and \eqref{RCP:eq:convGn}, we conclude
    \begin{align*}
        &\E[\tilde{Z}_n(t,\lambda)\tilde{Z}_n(t',\lambda')]=G_n(1,\beta_n)\E\left[W^{(n)}_1(\lambda)\left( W^{(n)}_1(\lambda')\right)^T\right](G_n(1,\beta_n'))^T\\
        &\qquad - G_n(1,\beta_n)\E\left[W^{(n)}_1(\lambda)\left(W^{(n)}_1(\lambda') - W^{(n)}_1(t')\right)^T\right](G_n(1,\alpha_n'))^T\\
        &\qquad -G_n(1,\beta_n)\E\left[W^{(n)}_1(\lambda)\left(W^{(n)}_2(1) - W^{(n)}_2(\lambda')\right)^T\right] (G_n(\beta_n', \alpha_n'))^T\\
        &\qquad - G_n(1,\alpha_n)\E\left[\left(W^{(n)}_{1}(\lambda) - W^{(n)}_1(t)\right) \left(W^{(n)}_1(\lambda')\right)^T\right](G_n(1,\beta_n'))^T\\
        &\qquad + G_n(1,\alpha_n) \E\left[\left(W^{(n)}_1(\lambda) - W^{(n)}_1(t)\right)\left(W^{(n)}_1(\lambda') - W^{(n)}_1(t')\right)^T\right] (G_n(1,\alpha'_n))^T\\
        &\qquad + G_n(1,\alpha_n)\E\left[\left(W^{(n)}_1(\lambda) -W^{(n)}_1(t)\right)\left(W^{(n)}_2(1) -W^{(n)}_2(\lambda')\right)^T\right](G_n(\beta_n', \alpha_n'))^T\\
        &\qquad -G_n(\beta_n, \alpha_n)\E\left[\left(W^{(n)}_2(1) - W^{(n)}_2(\lambda)\right)\left(W^{(n)}_1(\lambda')\right)^T\right](G(1,\beta_n'))^T\\
        &\qquad + G_n(\beta_n, \alpha_n)\E\left[\left(W^{(n)}_2(1) - W^{(n)}_2(\lambda)\right) \left(W^{(n)}_1(\lambda') - W^{(n)}_1(t')\right)^T\right] (G(1,\alpha'_n))^T\\
        &\qquad + G_n(\beta_n, \alpha_n)\E\left[\left(W^{(n)}_2(1) - W^{(n)}_2(\lambda)\right) \left(W^{(n)}_2(1) - W^{(n)}_2(\lambda')\right)^T\right](G(\beta_n', \alpha_n'))^T\\
        &\rightarrow \sigma^2_A(1-\lambda)(1-\lambda')\Bigg(\min\{\lambda, \lambda'\} - \frac{1}{1-t'}\max\{\min\{\lambda, \lambda'\}-t', 0\} - \frac{t'}{1-t'}\max\{\lambda-\lambda',0\}\\
        &\qquad - \frac{1}{1-t}\max\{\min\{\lambda, \lambda'\} -t,0\} + \frac{1}{(1-t)(1-t')}\max\{\min\{\lambda, \lambda'\}-\max\{t,t'\}, 0\}\\
        &\qquad + \frac{t'}{(1-t)(1-t')}\max\{\lambda - \max\{t,\lambda'\}, 0\} - \frac{t}{1-t}\max\{\lambda'-\lambda, 0\}\\
        &\qquad + \frac{t}{(1-t)(1-t')}\max\{\lambda'-\max\{\lambda, t'\}, 0\} + \frac{tt'}{(1-t)(1-t')}(1-\max\{\lambda, \lambda'\})\Bigg)\\
        &=\sigma^2_A (1-\lambda)(1-\lambda') \min\left\{\frac{t}{1-t}, \frac{t'}{1-t'}\right\}.
    \end{align*}
    Moreover, by definition of $c: [0,1]^2 \times [0,1]^2 \rightarrow \R$ at $((1,1), (1,1)),$ we have $\E[\tilde{Z}^2_n(1,1)] \rightarrow 0 = c((1,1),(1,1))$ as $n\rightarrow \infty.$
    Analogously, by applying Donsker's theorem in higher dimensions for the partial sums $W^{(n)}_l,$ $l =1,2,$ (cf. Lemma \ref{RCP:res:clt1}) and \eqref{RCP:eq:convGn}, we can derive the limit covariance of $\tilde{Z}_n$ in the remaining three cases, where the cases 2) and 3) are symmetric. Finally, we conclude that for $((t,\lambda), (t',\lambda')) \in [0,1]^2 \times [0,1]^2 \setminus \{((1,1), (1,1))\},$
    \[\E\left[\tilde{Z}_n(t,\lambda)\tilde{Z}_n(t',\lambda')\right] \rightarrow \sigma^2_A \begin{cases}
(1-\lambda)(1-\lambda') \min\left\{\frac{t}{1-t}, \frac{t'}{1-t'}\right\}, & \text{ if } t \leq \lambda, t' \leq \lambda'\\
(1-\lambda)\lambda' \min\left\{\frac{t(1-t')}{(1-t)t'}, 1\right\}, & \text{ if } t \leq \lambda, t' > \lambda'\\
\lambda(1-\lambda') \min\left\{\frac{(1-t)t'}{t(1-t')}, 1\right\}, & \text{ if } t > \lambda, t' \leq \lambda'\\
\lambda \lambda' \min\left\{\frac{1-t}{t}, \frac{1-t'}{t'}\right\}, & \text{ if } t > \lambda, t' > \lambda'
\end{cases}.\]
    In particular, by definition of $\hat{Z}_n,$ we conclude for all $(t,\lambda), (t',\lambda') \in [0,1]^2$ that
    \[\Var[\hat{Z}_n((t,\lambda),(t',\lambda'))] \rightarrow \hat{c}((t,\lambda),(t',\lambda'))\]
    as desired. Next, we analyze $\E[\hat{Z}_n^4((t,\lambda),(t',\lambda'))].$ For this reason, we will calculate the mixed fourth moments $\E[\tilde{Z}_n^3(t,\lambda)\tilde{Z}_n(t',\lambda')]$ and $\E[\tilde{Z}^2_n(t,\lambda)\tilde{Z}^2_n(t',\lambda')].$ Recall that $\tilde{Z}_n$ can be represented by finitely many scalar products between the partial sums $W^{(n)}_l, l =1,2,$ and the non-random function $G_n$ evaluated at $x_n(t,\lambda), y_n(t,\lambda) \in \{0,\alpha_n(t,\lambda), \beta_n(\lambda), \gamma_n(t,\lambda), 1\}.$ Because of Lemma \ref{RCP:res:clt1}, $(W^{(n)}_l)_{n\in \N}$, $l = 1,2,$ converges weakly in the Skorokhod topology to a $d$-dimensional Brownian motion. Thus, we can apply the same arguments as for the computation of the mixed fourth moments of partial sums of iid standard normal distributed random variables.
    
    \begin{Lem}\label{RCP:res:4thMomentIIDN}
    Let $(\xi_i, i \geq 1)$ be a sequence of iid one-dimensional standard normal random variables and $W^{(n)}(t) := \sum_{k=1}^{n}W^{(n)}_k\1_{\{nt \in [k,k+1)\}},$ where $W^{(n)}_k:= n^{-1/2}\sum_{i=1}^k \xi_i.$ Moreover, let $\{\alpha_i\}_{i=1,2,3,4},$ $\{\beta_i\}_{i=1,2,3,4} \subset [0,1]$ with $\alpha_i \leq \beta_i$ for all $i = 1,2,3,4,$ and denote by $\check{\alpha} := \max\{\alpha_i : i =1,2,3,4\},$ $\check{\alpha}_{ij} := \max\{\alpha_i, \alpha_j\},$ $\hat{\beta} := \min\{\beta_i: i = 1,2,3,4\},$ and $\hat{\beta}_{ij} := \min\{\beta_i, \beta_j\}.$ Then, as $n\rightarrow \infty,$ we have
    \begin{equation*}
    \begin{split}
        \E&\left[\prod_{i=1}^4 \left(W^{(n)}(\beta_i) - W^{(n)}(\alpha_i)\right)\right]\\
        &\quad = \max\{\hat{\beta}_{12}-\check{\alpha}_{12}, 0\}\max\{\hat{\beta}_{34}-\check{\alpha}_{34}, 0\}+ \max\{\hat{\beta}_{13}-\check{\alpha}_{13}, 0\}\max\{\hat{\beta}_{24}-\check{\alpha}_{24}, 0\}\\
        &\qquad +\max\{\hat{\beta}_{14}-\check{\alpha}_{14}, 0\}\max\{\hat{\beta}_{23}-\check{\alpha}_{23}, 0\}.
    \end{split}
    \end{equation*}
    \end{Lem}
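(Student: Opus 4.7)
The plan is to observe that the statement is essentially Isserlis' (Wick's) theorem applied to a Gaussian vector of increments, followed by a straightforward floor-function computation of the pairwise covariances. There is no need to pass to a Brownian limit first.

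First, I would note that because the $\xi_i$'s are iid standard normal, for each fixed $n$ the four-dimensional vector $(W^{(n)}(\beta_i)-W^{(n)}(\alpha_i))_{i=1,\dots,4}$ is centered and jointly Gaussian: every coordinate is a linear combination of $\xi_1,\dots,\xi_n$, so any linear combination of the four coordinates is again a linear combination of iid standard normals. Isserlis' theorem therefore gives the decomposition
\[\E\left[\prod_{i=1}^4 \bigl(W^{(n)}(\beta_i)-W^{(n)}(\alpha_i)\bigr)\right] = C^{(n)}_{12}C^{(n)}_{34} + C^{(n)}_{13}C^{(n)}_{24} + C^{(n)}_{14}C^{(n)}_{23},\]
where $C^{(n)}_{ij}$ denotes the covariance of the $i$th and $j$th increment.

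Second, I would compute $C^{(n)}_{ij}$ directly. Since $\Cov(\xi_k,\xi_\ell)=\1_{\{k=\ell\}}$, the covariance $C^{(n)}_{ij}$ equals $n^{-1}$ times the cardinality of $\{k:\lfloor n\alpha_i\rfloor<k\leq \lfloor n\beta_i\rfloor\}\cap\{k:\lfloor n\alpha_j\rfloor<k\leq\lfloor n\beta_j\rfloor\}$, namely $n^{-1}\max\{\lfloor n\hat\beta_{ij}\rfloor-\lfloor n\check\alpha_{ij}\rfloor,0\}$. As $n\to\infty$ this tends to $\max\{\hat\beta_{ij}-\check\alpha_{ij},0\}$, and inserting these limits into the Isserlis decomposition delivers exactly the three-term formula in the statement.

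There is no real obstacle to overcome; the only subtlety worth flagging is that one should \emph{not} first pass to the Brownian limit via Donsker and then invoke Isserlis on the limiting Brownian increments, because this route would require an extra uniform integrability argument for the products of the four increments. The finite-$n$ route sketched above avoids this entirely, since every expectation involved is explicit at each step and only the scalar limits of the pairwise covariances need to be taken.
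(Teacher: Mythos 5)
Your proof is correct and, at heart, follows the same finite-$n$ route as the paper: compute the fourth mixed moment exactly for each $n$ by exploiting the Gaussian structure of the $\xi_i$'s, then pass to the limit of the pairwise covariances via the floor-function counts. The only (stylistic) difference is that you invoke Isserlis' theorem once, at the level of the four increments, whereas the paper expands the product into a quadruple sum over $\xi$-indices and sorts the nonzero terms by hand into one $\E[\xi_i^4]$ term (which it sends to $0$) and three off-diagonal $\E[\xi_i^2\xi_j^2]$ double sums; your packaging avoids that extra vanishing diagonal term and is a bit cleaner, but the underlying computation is the same.
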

    
    \begin{proof}
        Since we assume that $(\xi_i, i \geq 1)$ is a sequence of iid standard normal random variables and by the definition of $W^{(n)},$ we conclude as $n\rightarrow \infty,$
        \begin{align*}
            \E&\left[\prod_{i=1}^4 \left(W^{(n)}(\beta_i) - W^{(n)}(\alpha_i)\right)\right]\\
            &\quad = \frac{1}{n^2}\sum_{i= \lfloor n\check{\alpha}\rfloor +1}^{\lfloor n \hat{\beta}\rfloor}\E\left[\xi_i^4\right] + \frac{1}{n^2} \sum_{i = \lfloor n\check{\alpha}_{12}\rfloor +1}^{\lfloor n\hat{\beta}_{12}\rfloor} \sum_{\substack{j = \lfloor n\check{\alpha}_{34}\rfloor +1\\ j\neq i}}^{\lfloor n\hat{\beta}_{34}\rfloor}\E\left[\xi^2_i \xi^2_j\right]\\
            &\qquad + \frac{1}{n^2} \sum_{i = \lfloor n\check{\alpha}_{13}\rfloor +1}^{\lfloor n\hat{\beta}_{13}\rfloor} \sum_{\substack{j = \lfloor n\check{\alpha}_{24}\rfloor +1\\ j\neq i}}^{\lfloor n\hat{\beta}_{24}\rfloor}\E\left[\xi^2_i \xi^2_j\right] + \frac{1}{n^2} \sum_{i = \lfloor n\check{\alpha}_{14}\rfloor +1}^{\lfloor n\hat{\beta}_{14}\rfloor} \sum_{\substack{j = \lfloor n\check{\alpha}_{23}\rfloor +1\\ j\neq i}}^{\lfloor n\hat{\beta}_{23}\rfloor}\E\left[\xi^2_i \xi^2_j\right]\\
            &\quad \rightarrow 0 + \max\{\hat{\beta}_{12}-\check{\alpha}_{12}, 0\}\max\{\hat{\beta}_{34}-\check{\alpha}_{34}, 0\}+ \max\{\hat{\beta}_{13}-\check{\alpha}_{13}, 0\}\max\{\hat{\beta}_{24}-\check{\alpha}_{24}, 0\}\\
        &\qquad +\max\{\hat{\beta}_{14}-\check{\alpha}_{14}, 0\}\max\{\hat{\beta}_{23}-\check{\alpha}_{23}, 0\}.
        \end{align*}
    \end{proof}
    
    Now, applying Lemma \ref{RCP:res:4thMomentIIDN} and the convergence of $G_n$ in \eqref{RCP:eq:convGn}, elementary calculations yield for $((t,\lambda), (t',\lambda')) \in [0,1]^2 \times [0,1]^2 \setminus \ \{((t, \lambda), (t', \lambda')): t = \lambda = 1 \text{ or } t' = \lambda' = 1\},$
    \begin{align*}
        \E&\left[\tilde{Z}^3_n(t,\lambda) \tilde{Z}_n(t',\lambda')\right]\\
        &\qquad \rightarrow 3\sigma_A^4 \begin{cases}
        (1-\lambda)^3(1-\lambda') \frac{t}{1-t} \min\left\{\frac{t}{1-t}, \frac{t'}{1-t'}\right\}, & \text{ if } t \leq \lambda, t' \leq \lambda'\\
(1-\lambda)^3\lambda' \left(\frac{t}{1-t}\right)^2\min\left\{\frac{1-t}{t}, \frac{1-t'}{t'}\right\}, & \text{ if } t \leq \lambda, t' > \lambda'\\
\lambda^3 (1-\lambda') \frac{1-t}{t} \frac{t'}{1-t'}\min\left\{\frac{1-t}{t}, \frac{1-t'}{t'}\right\}, & \text{ if } t > \lambda, t' \leq \lambda'\\
\lambda^3 \lambda' \frac{1-t}{t}\min\left\{\frac{1-t}{t}, \frac{1-t'}{t'}\right\}, & \text{ if } t > \lambda, t' > \lambda'
        \end{cases}
    \end{align*}
    and
    \begin{align*}
        &\E\left[\tilde{Z}^2_n(t,\lambda) \tilde{Z}^2_n(t',\lambda')\right]\\
        &\quad \rightarrow \sigma_A^4 \begin{cases}
        (1-\lambda)^2(1-\lambda')^2\left\{\frac{tt'}{(1-t)(1-t')} + 2 \left(\min\left\{\frac{t}{1-t}, \frac{t'}{1-t'}\right\}\right)^2\right\}, & \text{ if } t \leq \lambda, t' \leq \lambda'\\
(1-\lambda)^2(\lambda')^2 \left\{\frac{t(1-t')}{(1-t)t'}+2\left(\frac{t}{1-t}\right)^2\left(\min\left\{\frac{1-t}{t}, \frac{1-t'}{t'}\right\}\right)^2\right\}, & \text{ if } t \leq \lambda, t' > \lambda'\\
\lambda^2 (1-\lambda')^2 \left\{\frac{(1-t)t'}{t(1-t')} + 2\left(\frac{t'}{1-t'}\right)^2 \left(\min\left\{\frac{1-t}{t}, \frac{1-t'}{t'}\right\}\right)^2\right\}, & \text{ if } t > \lambda, t' \leq \lambda'\\
\lambda^2 (\lambda')^2 \left\{\frac{(1-t)(1-t')}{tt'} + 2\left(\min\left\{\frac{1-t}{t}, \frac{1-t'}{t'}\right\}\right)^2\right\}, & \text{ if } t > \lambda, t' > \lambda'
        \end{cases}.
    \end{align*}
    Note that for all $((t,\lambda), (t', \lambda')) \in \{((t,\lambda), (t', \lambda')): t = \lambda = 1 \text{ or } t' = \lambda' = 1\}$, we have for all $n\in \N$, 
    \[\E[\tilde{Z}_n^3(t,\lambda) \tilde{Z}_n(t', \lambda')] = \E[\tilde{Z}_n^2(t,\lambda) \tilde{Z}_n(t',\lambda')] = 0.\]
    Hence, the fourth moment of $\hat{Z}_n((t,\lambda),(t',\lambda'))$ satisfies for all $(t,\lambda), (t',\lambda') \in [0,1]^2,$
    \begin{align*}
        \E\left[\hat{Z}^4_n((t,\lambda),(t',\lambda'))\right]&=x^4 \E\left[\tilde{Z}^4_n(t,\lambda)\right] + 4x^3y \E\left[\tilde{Z}^3_n(t,\lambda)\tilde{Z}_n(t',\lambda')\right]\\
        &\qquad + 6 x^2y^2\E\left[\tilde{Z}^2_n(t,\lambda)\tilde{Z}^2_n(t',\lambda')\right]\\
        &\qquad + 4 xy^3\E\left[\tilde{Z}_n(t,\lambda)\tilde{Z}^3_n(t',\lambda')\right] + y^4 \E\left[\tilde{Z}^4_n(t',\lambda')\right]\\
        &\rightarrow 3 \left(\hat{c}((t,\lambda),(t',\lambda'))\right)^2
    \end{align*}
    as desired. An application of the fourth moment theorem \cite[Theorem 1]{NP05} yields that 
    \[\hat{Z}_n((t,\lambda),(t',\lambda')) \Rightarrow x Z^*(t,\lambda) + yZ^*(t',\lambda') =: \hat{Z}^*((t,\lambda),(t',\lambda')),\]
    where $\hat{Z}^*((t, \lambda), (t',\lambda'))$ is a normally distributed random variable with mean zero and variance equal to $\hat{c}((t,\lambda), (t',\lambda')).$ Together with the tightness of the sequence $(\tilde{Z}_n)_{n\in \N}$, the statement of Theorem \ref{RCP:res:limitZn1} follows.
\end{proof}

\begin{Exp}[Normal observations, univariate case $m =1$]$ $
    \begin{enumerate}
        \item Change in the mean when the variance $\sigma^2$ is known and constant: we have $d = 1$ and the density function equals
        \[f(x; \theta) = \exp\left(\frac{x}{\sigma^2}\theta - \frac{\theta^2}{2\sigma^2}-\frac{1}{2}\log(2\pi\sigma^2)\right),\]
        and therefore $T(x) = x/\sigma^2,$ $H(x) = x^2\sigma^2/2,$ and $A(\theta) = \theta^2/(2\sigma^2).$ Elementary calculations yield that $A''(\theta) = 1/\sigma^2,$ $H''(x) = \sigma^2.$
        \item Simultaneous change in the mean and variance: we have that $d = 2$ and for $\mu \in \R$ and $\sigma\in \R_+$, the density is given by
        \[f(x; \mu, \sigma^2) = \exp\left(\frac{\mu}{\sigma^2}x + \frac{1}{2\sigma^2}x^2 - \frac{\mu}{2\sigma^2} - \frac{1}{2}\log(2\pi \sigma^2)\right).\]
        Now, setting $\theta(\mu, \sigma^2) = (\theta_1, \theta_2)^T = (\mu/\sigma^2, -1/(2\sigma^2))^T\in \R^2,$ the density function can be written as
        \[f(x; \theta) = \exp\left(\theta_1 x - \theta_2 x^2 + \frac{1}{4}\frac{\theta^2_1}{\theta_2} - \frac{1}{2}\log\left(-\frac{\pi}{\theta_2}\right)\right),\]
        and therefore $T(x) = (x, x^2)^T,$ $H(x) = -\frac{1}{2}\log(2\pi(x_2-x^2_1)),$ and $A(\theta) = -\frac{1}{4}\frac{\theta^2_1}{\theta_2} + \frac{1}{2}\log(-\pi/\theta_2).$ Elementary calculations yield that 
        \[A''(\theta) = \begin{pmatrix}
                            -\frac{1}{2\theta_2} & \frac{\theta_1}{2\theta_2^2}\\
                            \frac{\theta_1}{2\theta^2_2} & -\frac{\theta^2_1}{2\theta^3_2}+\frac{1}{2\theta_2^2}
                        \end{pmatrix}, \quad 
        H''(x) = \begin{pmatrix}
                        \frac{x^2_1 + x_2}{(x_1^2 - x_2)^2} & -\frac{x_1}{(x^2_1 - x_2)^2}\\
                        -\frac{x_1}{(x_1^2 - x_2)^2} & \frac{1}{2(x^2_1 - x_2)^2}
        \end{pmatrix}.\]
    \end{enumerate}
\end{Exp}

\begin{Exp}[Normal observations, multivariate case]$ $
    \begin{enumerate}
        \item Change in the mean when the covariance matrix $\Sigma$ is known and constant: we have $d = m,$ and the density function equals 
        \[f(x; \theta) = \exp\left(\theta^T\Sigma^{-1}x - \frac{1}{2}\theta^T \Sigma^{-1}\theta - \frac{1}{2}x^T \Sigma^{-1}x - \log((2\pi)^{m/2}det(\Sigma))\right),\]
        where $\Sigma \in \R^{m\times m}$ is symmetric, positive definite. Hence, $T(x) = \Sigma^{-1}x,$ $H(x) = \frac{1}{2}x^T\Sigma x,$ and $A(\theta) = \frac{1}{2}\theta^T\Sigma^{-1}\theta.$ Elementary calculations yield that $H''(x) = \Sigma,$ $A''(\theta) = \Sigma^{-1}.$
        \item Simultaneous change in the mean and covariance matrix: we have $d = m+m^2,$ and for $\mu \in \R^m$ and $\Sigma \in \R^{m\times m}$ being symmetric and positive definite, the density function equals
        \[f(x; \mu, \Sigma) = \exp\left(\mu^T \Sigma^{-1} x - \frac{1}{2}\mu^T\Sigma^{-1}\mu - \frac{1}{2}x^T\Sigma^{-1}x-\log((2\pi)^  {m/2}\det(\Sigma))\right).\]
        With a little abuse of notation, we identify $\R^{m\times m} \equiv \R^{m^2}$ and $\Sigma \equiv vech(\Sigma) \in \R^{m^2}.$ Then, setting $\theta(\mu, \Sigma) = (\theta_1, \theta_2)^T = (\Sigma^{-1}\mu, -\frac{1}{2}\Sigma^{-1}) \in \R^{m+m^2}$ the density can be rewritten in form of its natural parametrization, i.e.,
        \[f(x; \theta) = \exp\left(\theta_1^Tx + x^T\theta_2 x + \frac{1}{4}\theta_1^T \theta^{-1}_2\theta_1 - \frac{1}{2}\log(\det(-\pi \theta^{-1}_2))\right),\]
        and therefore $T(x) = (x, xx^T) \in \R^{m+m^2},$ $H(x) = -\frac{1}{2}\log(\det(2\pi(x_2-x_1x_1^T)))$, for $(x_1, x_2)\in \R^m \times \R^{m^2},$ and $A(\theta) = -\frac{1}{4}\theta^T_1 \theta^{-1}_2 \theta_1 + \frac{1}{2}\log(\det(-\pi\theta_2^{-1})).$ Elementary calculations yield that
        \[A'(\theta) := (A_{\theta_1}, A_{\theta_2})^T := \left(-\frac{1}{2}\theta_1^T\theta_2^{-1},\,  \frac{1}{4}(\theta_1^T\theta_2^{-1}\otimes \theta_1^T\theta_2^{-1}) - \frac{1}{2}vech(\theta_2^{-1})^T\right)^T,\]
        and
        \[A''(\theta) := \begin{pmatrix}
        \frac{\partial A_{\theta_1}}{\partial \theta_1}& \frac{\partial A_{\theta_1}}{\partial \theta_2}\\
        \frac{\partial A_{\theta_2}}{\partial \theta_1}&  \frac{\partial A_{\theta_2}}{\partial \theta_2},
        \end{pmatrix}\]
        where
        $\frac{\partial A_{\theta_1}}{\partial \theta_1}=-\frac{1}{2}\theta^{-1}_2 \in \R^{m\times m},$ $\frac{\partial A_{\theta_1}}{\partial \theta_2} = (\frac{\partial A_{\theta_2}}{\partial \theta_1})^T = (\theta^T_1 \theta_2^{-1})^T \otimes vech(\theta_2^{-1})^T \in \R^{m \times m^2},$ and $\frac{\partial A_{\theta_2}}{\partial \theta_2} = \frac{1}{2}(\theta_1^T\theta_2^{-1}) \otimes (\theta_1^T\theta_2^{-1}) \otimes vech(\theta_2^{-1}) \in \R^{m^2 \times m^2}.$ 
    \end{enumerate}
\end{Exp}

The following corollary shows that under a same rescaling of size $(n\delta^2)^{-1}$ used for $Z_n$, the remainder process $R_n$ converges in probability to the zero process.

\begin{Prop}[A limit theorem for $R_n$ under the alternative]\label{RCP:res:limitRn1}
    Let the assumptions of Theorem \ref{RCP:res:limitZn1} be satisfied. Then, the remainder process $(n\delta^2)^{-1}R_n$ converges in probability in the Skorokhod topology to the zero process. 
\end{Prop}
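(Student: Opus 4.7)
The plan is to expand $R_n(k,k^*) = S_n(k,k^*) - \mu_n(k,k^*) - Z_n(k,k^*)$ via the Lagrange form of Taylor's theorem, identify the leading quadratic piece, and bound it uniformly in $(k,k^*)$ on the bulk of the data. Since $\tau_l = A'(\theta^{(l)}_0)$ with $A''(\theta_A)$ positive definite, $\delta^2$ and $\Delta^2$ are of the same order, so $n\delta^2\to\infty$ by Assumption \ref{RCP:ass:cases}, and dividing the bound by $n\delta^2$ yields the claim. Concretely, expanding each of the three $H$-terms of $S_n(k,k^*)$ about the expectation of its argument, for $k\leq k^*$ (the case $k>k^*$ is symmetric),
\begin{align*}
R_n(k,k^*) &= \tfrac{k}{2}(B_n(k,k^*)-\tau_1)^T H''(\xi_1)(B_n(k,k^*)-\tau_1)\\
&\quad + \tfrac{n-k}{2}(B^*_n(k,k^*)-\tau^{(1)}_n)^T H''(\xi_2)(B^*_n(k,k^*)-\tau^{(1)}_n)\\
&\quad - \tfrac{n}{2}(B_n(n,k^*)-\tau^{(2)}_n)^T H''(\xi_3)(B_n(n,k^*)-\tau^{(2)}_n),
\end{align*}
where $\tau^{(1)}_n := \tfrac{k^*-k}{n-k}\tau_1 + \tfrac{n-k^*}{n-k}\tau_2$, $\tau^{(2)}_n := \tfrac{k^*}{n}\tau_1 + \tfrac{n-k^*}{n}\tau_2$, and each $\xi_i$ lies on the line segment between the relevant empirical mean and its expectation.

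On the bulk $\{t,\lambda\in[\eta,1-\eta]\}$ for $\eta>0$ fixed, Lemma \ref{RCP:res:clt1} together with $\tau_1,\tau_2\to\tau_A$ gives uniform consistency of the empirical means, so $\xi_i\to\tau_A$ uniformly and, by continuity, $H''(\xi_i)\to H''(\tau_A)$. Setting $V_1 := B_n(k,k^*)-\tau_1$ and $V_2 := B^*_n(k,k^*)-\tau^{(1)}_n$, the convex-combination identity $B_n(n,k^*)-\tau^{(2)}_n = \tfrac{k}{n}V_1+\tfrac{n-k}{n}V_2$ causes the three quadratic forms, after replacing each $H''(\xi_i)$ by the common limit $H''(\tau_A)$, to collapse algebraically to
\[\tfrac{k(n-k)}{2n}(V_1-V_2)^T H''(\tau_A)(V_1-V_2).\]
Since $V_1-V_2$ is a linear combination of increments of $W^{(n)}_1,W^{(n)}_2$, Lemma \ref{RCP:res:clt1} and the continuous mapping theorem yield $V_1-V_2 = O_P(n^{-1/2})$ uniformly on the bulk, so the collapsed expression is $O_P(1)$ uniformly; the error from replacing $H''(\xi_i)$ by $H''(\tau_A)$ is $o_P(1)$ uniformly by the same consistency. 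Division by $n\delta^2\to\infty$ gives uniform convergence to zero on the bulk.

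The main obstacle is the boundary region where $k/n$ or $(n-k)/n$ tends to zero: in the Gaussian parametrisation (cf.\ Remark \ref{RCP:rmk:derivativesNormal}), $H''$ is defined only on the open set where the second block of its argument dominates the outer product of the first, and for small $k$ the empirical mean $B_n(k,k^*)$ may leave that set with non-negligible probability, so the Lagrange intermediate point $\xi_i$ cannot be controlled directly. I would handle this by truncating on the high-probability event that all relevant empirical means lie in a fixed compact subset of $\Theta_0$ around $\tau_A$, and by a dyadic decomposition of $\{1,\dots,n\}$ combined with Doob's $L^2$-maximal inequality applied to the martingales $S^{(l)}_k := \sum_{i\leq k}(T(X_{l,i})-\tau_l)$ to bound $\sup_{k\leq\eta n}\|S^{(l)}_k\|^2/k$ (and its analogue near $k=n$) by a slowly-growing factor that remains $o(n\delta^2)$ under Assumption \ref{RCP:ass:cases}. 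Since Skorokhod convergence to the continuous zero process is equivalent to uniform convergence on $[0,1]^2$, combining the bulk and boundary estimates gives the proposition.
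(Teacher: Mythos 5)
Your decomposition of $R_n(k,k^*)$ via the Lagrange remainder of Taylor's theorem is exactly the one the paper uses, so at the structural level the two proofs coincide. What you add is genuinely useful: the convex-combination identity $\tfrac{k}{n}V_1+\tfrac{n-k}{n}V_2 = B_n(n,k^*)-\tau^{(2)}_n$, which, after replacing each $H''(\xi_i)$ by the common limit $H''(\tau_A)$, collapses the three quadratic forms into the single expression $\tfrac{k(n-k)}{2n}(V_1-V_2)^T H''(\tau_A)(V_1-V_2)$. That identity is correct (it is the ``total sum of squares $=$ between $+$ within'' algebra), and it makes the $O_{\Pro}(1)$ bound on compact subsets of $(0,1)^2$ transparent; the paper merely asserts without elaboration that an application of Donsker's theorem together with Assumption \ref{RCP:ass:cases} yields tightness of $(R_n)_{n\in\N}$ and moves on. So on the bulk your argument is correct and more informative than the published one.

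Where I would push back is your treatment of the boundary. You are right to flag that for small $k$ (and symmetrically for $n-k$ small) neither $H''(\xi_i)$ nor $\sup_{k}\|S^{(l)}_k\|^2/k$ is controllable without extra work, and truncation to a compact neighbourhood of $\tau_A$ plus dyadic Doob maximal inequalities is the right tool. But your assertion that the resulting bound is ``a slowly-growing factor that remains $o(n\delta^2)$ under Assumption \ref{RCP:ass:cases}'' does not follow from what you have. By the law of the iterated logarithm, $\sup_{1\leq k\leq n}\|S^{(l)}_k\|^2/k$ is of exact order $\log\log n$, while Assumption \ref{RCP:ass:cases} only gives $n\Delta^2\to\infty$, hence $n\delta^2\to\infty$, with no rate; $\log\log n = o(n\delta^2)$ is therefore not guaranteed. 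To close this one would need to either strengthen the assumption to $n\delta^2/\log\log n\to\infty$ or restrict the supremum to $k,k^*\in[n\gamma',n(1-\gamma')]$ for some fixed $\gamma'>0$. Be aware, though, that the paper's own proof is silent on precisely this point --- the claimed tightness of $(R_n)$ in $D([0,1]^2,\R)$ would require $\sup_{t,\lambda}|R_n(t,\lambda)|=O_{\Pro}(1)$, and the term $\tfrac{1}{2k}(S^{(1)}_k)^T H''(\xi_1)S^{(1)}_k$ taken over all $k\geq 1$ is not $O_{\Pro}(1)$ for the reason above --- so you have correctly diagnosed a gap in the published argument rather than introduced a new one; the remaining work is to either supply the extra rate hypothesis or argue that the later applications only use the bulk.
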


\begin{proof}
    Observe that by equation \eqref{RCP:eq:testStatH1-centered}, we have for $k\leq k^*$,
    \begin{equation*}
    \begin{split}
        R_n(k,k^*)&= \frac{1}{2k}\left(\sum_{i=1}^k(T(X_{1,i}) -\tau_1)\right)^T H''(\xi_1)\left(\sum_{i = 1}^k(T(X_{1,i}) - \tau_1)\right)\\
        &\qquad + \frac{1}{2(n-k)}\left(\sum_{i = k+1}^{k^*}(T(X_{1,i}) - \tau_1) + \sum_{i = k^*+1}^n(T(X_{2,i})-\tau_2)\right)^T H''(\xi_2)\\
        &\qquad \qquad \times \left(\sum_{i=k+1}^{k^*} (T(X_{1,i}) - \tau_1) + \sum_{i = k^*+1}^n(T(X_{2,i}) - \tau_2)\right)\\
        &\qquad - \frac{1}{2n}\left(\sum_{i=1}^{k^*}(T(X_{1,i}) -\tau_1) + \sum_{i=k^*+1}^n(T(X_ {2,i})-\tau_2)\right)^TH''(\xi)\\
        &\qquad \qquad \times \left(\sum_{i=1}^{k^*}(T(X_{1,i}) - \tau_1) + \sum_{i=k^*+1}^n(T(X_{2,i}) -\tau_2)\right),\\
    \end{split}
    \end{equation*}
    where $\xi_1$  is in the interval connecting $B_n(k,k^*)$ and $\tau_1,$ $\xi_2$ is in the interval connecting $B_n^*(k,k^*)$ and $(k^*-k)/(n-k)\tau_1 + (n-k^*)/(n-k)\tau_2$, and $\xi$ is in the interval connecting $B_n(n,k^*)$ and $k^*/n\tau_1 + (n-k^*)/n\tau_2.$ In contrast, for $k > k^*,$ we have
    \begin{equation*}
    \begin{split}
        R_n(k,k^*)&= \frac{1}{2k}\left(\sum_{i=1}^{k^*}(T(X_{1,i}) -\tau_1) + \sum_{i=k^*+1}^k (T(X_{2,i}) -\tau_2) \right)^T H''(\tilde{\xi}_1)\\
        &\qquad \qquad \times \left(\sum_{i = 1}^{k^*}(T(X_{1,i}) - \tau_1) + \sum_{i=k^*+1}^k(T(X_{2,i}) -\tau_2)\right) \\
        &\qquad + \frac{1}{2(n-k)}\left(\sum_{i = k+1}^{n}(T(X_{2,i})-\tau_2)\right)^T H''(\tilde{\xi}_2)\left(\sum_{i=k+1}^{n} (T(X_{2,i}) - \tau_2)\right)\\
        &\qquad - \frac{1}{2n}\left(\sum_{i=1}^{k^*}(T(X_{1,i}) -\tau_1) + \sum_{i=k^*+1}^n(T(X_{2,i})-\tau_2)\right)^TH''(\xi)\\
        &\qquad \qquad \times \left(\sum_{i=1}^{k^*}(T(X_{1,i}) - \tau_1) + \sum_{i=k^*+1}^n(T(X_{2,i}) -\tau_2)\right)^T,
    \end{split}
    \end{equation*}
    where $\tilde{\xi}_1$ is in the interval connecting $B_n(k,k^*)$ and $k^*/k\tau_1 + (k-k^*)/k \tau_2$ and $\tilde{\xi}_2$ is in the interval connecting $B_n^*(k,k^*)$ and $\tau_2.$ Now, an application of Donsker's theorem (similarly to Lemma \ref{RCP:res:clt1}) together with Assumption \ref{RCP:ass:cases} yields that the sequence $(R_n)_{n\in \N}$ is tight, where for $t,\lambda \in [0,1],$
    \[R_n(t,\lambda) := \sum_{k=1}^{n-1} \sum_{k^*=1}^{n-1} R_n(k,k^*) \1_{\{nt\in [k, k+1)\}} \1_{\{n\lambda \in [k^*, k^*+1)\}}.\]
    Moreover, by Assumption \ref{RCP:ass:reg} and \ref{RCP:ass:cases} (in particular, since $A$ has continuous derivatives up to the second order), we conclude that $n\delta^2 \rightarrow \infty$ as $n\rightarrow \infty.$ Hence, 
    \[\frac{1}{\sqrt{n\delta^2}}R_n \rightarrow 0 \quad \text{ in probability}\]
    in the Skorokhod topology as $n\rightarrow \infty.$ This finishes the proof.
\end{proof}

\begin{Cor}\label{RCP:res:ZnRn}
    Let the assumptions of Theorem \ref{RCP:res:limitZn1} be satisfied. Then, we have
    \[\sup_{t,\lambda \in [0,1]}|Z_n(t,\lambda) + R_n(t,\lambda)| = \mathcal{O}_{\Pro}\left(\sqrt{n\delta^2}\right),\]
\end{Cor}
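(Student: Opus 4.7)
The plan is to deduce the corollary directly from Theorem \ref{RCP:res:limitZn1} and Proposition \ref{RCP:res:limitRn1} via the triangle inequality
\[
\sup_{t,\lambda\in[0,1]}|Z_n(t,\lambda)+R_n(t,\lambda)|\;\leq\;\sup_{t,\lambda\in[0,1]}|Z_n(t,\lambda)|\;+\;\sup_{t,\lambda\in[0,1]}|R_n(t,\lambda)|,
\]
treating the two terms separately. So the two sub-goals are: first, that $\sup_{t,\lambda}|Z_n(t,\lambda)|=\mathcal{O}_\Pro(\sqrt{n\delta^2})$; and second, that $\sup_{t,\lambda}|R_n(t,\lambda)|=o_\Pro(\sqrt{n\delta^2})$. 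Adding these up gives the claimed $\mathcal{O}_\Pro(\sqrt{n\delta^2})$ bound.

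For the $Z_n$-term, I would invoke the continuous mapping theorem. By Theorem \ref{RCP:res:limitZn1}, the rescaled process $(n\delta^2)^{-1/2}Z_n$ converges weakly in the Skorokhod topology on $D([0,1]^2,\R)$ to the Gaussian process $Z^*.$ The covariance function $c$ given in Theorem \ref{RCP:res:limitZn1} is continuous on $[0,1]^2\times [0,1]^2$ (with the convention $c((1,1),(1,1))=0$), and by a standard Kolmogorov-type argument $Z^*$ admits a version with continuous sample paths on $[0,1]^2,$ in particular with $\sup_{t,\lambda\in[0,1]}|Z^*(t,\lambda)|<\infty$ almost surely. Because the functional $f\mapsto \sup_{t,\lambda}|f(t,\lambda)|$ is continuous at sample-continuous functions in the Skorokhod topology (the Skorokhod and uniform topologies coincide on continuous paths, cf. Whitt \cite{W02}), the continuous mapping theorem yields
\[
\frac{1}{\sqrt{n\delta^2}}\sup_{t,\lambda\in[0,1]}|Z_n(t,\lambda)|\;\Rightarrow\;\sup_{t,\lambda\in[0,1]}|Z^*(t,\lambda)|,
\]
and hence the left-hand side is tight, i.e., $\sup_{t,\lambda}|Z_n(t,\lambda)|=\mathcal{O}_\Pro(\sqrt{n\delta^2}).$

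For the $R_n$-term, the proof of Proposition \ref{RCP:res:limitRn1} in fact establishes the stronger statement that $(n\delta^2)^{-1/2}R_n$ converges in probability to the zero process in the Skorokhod topology (this is exactly the display at the end of that proof). Applying the sup functional—continuous at the zero process in the Skorokhod topology—this gives
\[
\frac{1}{\sqrt{n\delta^2}}\sup_{t,\lambda\in[0,1]}|R_n(t,\lambda)|\;\xrightarrow{\Pro}\;0,
\]
so that $\sup_{t,\lambda}|R_n(t,\lambda)|=o_\Pro(\sqrt{n\delta^2}).$ Combining the two bounds in the triangle inequality completes the proof.

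The only point requiring a bit of care is the continuity of the sup functional in the Skorokhod topology on $D([0,1]^2,\R),$ which I expect to be the main (though minor) technical obstacle. This is handled by noting that the limit $Z^*$ has (a modification with) continuous sample paths, since on the set of continuous functions the Skorokhod topology agrees with the uniform topology, on which the sup-norm is trivially continuous; the continuous mapping theorem then applies at $Z^*$ (and at the zero process).
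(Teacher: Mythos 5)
Your proposal is correct and takes essentially the same route as the paper: the paper's proof also deduces the bound from tightness of $(\,(n\delta^2)^{-1/2}Z_n\,)_{n\in\N}$ (established in Theorem~\ref{RCP:res:limitZn1}) together with the behaviour of $R_n$ from Proposition~\ref{RCP:res:limitRn1}, and combines the two via the triangle inequality. You are merely more explicit about why tightness in the Skorokhod topology on $D([0,1]^2,\R)$ controls the supremum — namely, continuity of the sup functional at sample-continuous limits — a point the paper leaves implicit.
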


\begin{proof}
    Recall, the definition of the piecewise constant interpolations $Z_n := (Z_n(t,\lambda))_{t, \lambda \in [0,1]}$ and $(R_n(t,\lambda))_{t,\lambda \in [0,1]}$ of $(Z_n(k,k^*): k,k^* \in \{1,\cdots, n-1\})$ and $(R_n(k,k^*): k,k^* \in \{1,\cdots, n-1\}).$ Then, studying the proof of Theorem \ref{RCP:res:limitZn1} and Proposition \ref{RCP:res:limitRn1}, we conclude the tightness of the sequences $(\tilde{Z}_n)_{n\in \N} := ((n\delta^2)^{-1/2} Z_n)_{n\in \N}$ and $(R_n)_{n\in \N}.$ Hence,
    \[\frac{1}{\sqrt{n\delta^2}}\sup_{t,\lambda \in [0,1]}|Z_n(t,\lambda) + R_n(t,\lambda)| = \mathcal{O}_{\Pro}(1),\]
    which yields the stated claim.
\end{proof}

Once the null hypothesis $H_0$ ``no change point'' is rejected, one is interested in locating the change point $k^*_n$ or the change point fraction $\lambda^*_n := k^*_n/n.$ For this, we suggest the estimator
\begin{equation}\label{RCP:def:estCP}
    \hat{\lambda}_n := \frac{1}{n}\argmax_{1\leq k \leq n-1}\{2S_n(k)\}.
\end{equation}

In the following, we denote by $\hat{k}_n := n\hat{\lambda}_n$ the estimator of the location of the change point $k^*_n.$ Note that, as always, the estimator in \eqref{RCP:def:estCP} is random, as it depends on the data $X_1,\cdots, X_n$ and therefore, also on the true location of the change point $k^*_n.$ In the following, we will study the properties of this estimator. 

\begin{The}[Consistency of $\hat{\lambda}_n$]\label{RCP:res:consistency1}
    Let the assumptions of Theorem \ref{RCP:res:limitZn1} hold. Then, 
    \[\delta^2|\hat{k}_n - k^*_n| = \mathcal{O}_{\Pro}(1).\]
    In particular, $\hat{\lambda}_n$ is a consistent estimator of $\lambda^*_n$ with $|\hat{\lambda}_n - \lambda^*_n| = \mathcal{O}_{\Pro}((\delta^2 n)^{-1}).$
\end{The}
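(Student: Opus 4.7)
The strategy is the classical one for change-point consistency, adapted here to the two-parameter statistic \eqref{RCP:def:testStatH1}. Starting from the defining optimality $S_n(\hat k_n, k^*_n) \geq S_n(k^*_n, k^*_n)$, the decomposition \eqref{RCP:eq:testStatH1-centered} immediately yields
\begin{equation*}
    D_n(\hat k_n) \;\leq\; [Z_n(\hat k_n, k^*_n) - Z_n(k^*_n, k^*_n)] + [R_n(\hat k_n, k^*_n) - R_n(k^*_n, k^*_n)],
\end{equation*}
where $D_n(k) := \mu_n(k^*_n, k^*_n) - \mu_n(k, k^*_n) \geq 0$. The proof then reduces to a sharp lower bound on the deterministic left-hand side and a matching uniform upper bound on the stochastic right-hand side. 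A preliminary coarse comparison using the global bound $\sup_k|Z_n+R_n| = \mathcal{O}_\Pro(\sqrt{n\delta^2})$ from Corollary \ref{RCP:res:ZnRn} against a coarse signal of order $n\delta^2$ on the bulk already gives $|\hat k_n - k^*_n|/n \to 0$ in probability, so one may restrict attention to $k$ with $|k-k^*_n|/n$ arbitrarily small.

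\textbf{Lower bound on $D_n$.} Substituting \eqref{RCP:def:asyMean}, for $k \leq k^*_n$ one rearranges $D_n(k) = (n-k)\bigl[\alpha H(\tau_1) + (1-\alpha)H(\tau_2) - H(\alpha\tau_1+(1-\alpha)\tau_2)\bigr]$ with $\alpha = (k^*_n-k)/(n-k)$, and a symmetric identity for $k > k^*_n$. The mean-value form of the Taylor remainder combined with the strict convexity of $H$ at $\tau_A$ (Assumption \ref{RCP:ass:reg}, Remark \ref{RCP:rmk:derivativeH}) and $\tau_1(n),\tau_2(n) \to \tau_A$ (Assumption \ref{RCP:ass:cases}) shows that the bracket equals $\tfrac{1}{2}\alpha(1-\alpha)\sigma_A^2\delta^2(1+o(1))$. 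Using $(n-k)\alpha(1-\alpha) = (k^*_n-k)(n-k^*_n)/(n-k)$ together with $k^*_n/n \in [\gamma,1-\gamma]$ with probability tending to one, this produces
\begin{equation*}
    D_n(k) \;\geq\; c\,\sigma_A^2\,\delta^2\,|k - k^*_n|
\end{equation*}
for some $c = c(\gamma) > 0$, valid uniformly for $|k-k^*_n|/n$ bounded away from $1$ with probability tending to one.

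\textbf{Uniform upper bound on the stochastic part.} The key refinement of Corollary \ref{RCP:res:ZnRn} is the local bound
\begin{equation*}
    \bigl|Z_n(k,k^*_n)-Z_n(k^*_n,k^*_n) + R_n(k,k^*_n)-R_n(k^*_n,k^*_n)\bigr| \;=\; \mathcal{O}_\Pro\!\left(\sqrt{\delta^2 |k-k^*_n|}\vee 1\right)
\end{equation*}
uniformly in $k$. The rate $\sqrt{\delta^2|k-k^*_n|}$ is prescribed by the limit process $Z^*$: a direct computation from the covariance in Theorem \ref{RCP:res:limitZn1} gives $\Var(Z^*(\lambda,\lambda) - Z^*(t,\lambda)) = \sigma_A^2(1-\lambda)(\lambda-t)/(1-t)$ for $t \leq \lambda$, linear in $\lambda - t$. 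I would transfer this to the pre-limit level through the representation of $Z_n$ as linear combinations of $W^{(n)}_l$ (see Lemma \ref{RCP:res:clt1} and \eqref{RCP:eq:convGn}) and upgrade the pointwise estimate to a uniform-in-$k$ one by a standard peeling/chaining argument over the dyadic shells $\{k:2^j \leq \delta^2|k-k^*_n| < 2^{j+1}\}$. The quadratic remainder $R_n$ is treated analogously and is lower order in $n\delta^2$, as already indicated by Proposition \ref{RCP:res:limitRn1}.

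\textbf{Conclusion and main obstacle.} Plugging the two bounds at $k=\hat k_n$ gives $c\sigma_A^2\delta^2|\hat k_n - k^*_n| \leq \mathcal{O}_\Pro(\sqrt{\delta^2|\hat k_n - k^*_n|}\vee 1)$, which is incompatible with $\delta^2|\hat k_n - k^*_n| \to \infty$ in probability; hence $\delta^2|\hat k_n - k^*_n| = \mathcal{O}_\Pro(1)$, and dividing by $n$ yields $|\hat\lambda_n - \lambda^*_n| = \mathcal{O}_\Pro((n\delta^2)^{-1})$. The technical heart of the argument is the uniform local estimate on $Z_n+R_n$: it must accommodate the data-dependent randomness of $k^*_n$ (for which the two-parameter process-level convergence of Theorem \ref{RCP:res:limitZn1} is essential, since it lets one treat $\lambda = k^*_n/n$ as an extra continuous parameter) and it must control the quadratic remainder $R_n$ via a Hájek--Rényi-type inequality, since Doob's martingale inequality only handles the first-chaos (linear) part of $W^{(n)}_l$.
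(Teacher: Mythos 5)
Your proof is sound, and it is in fact sharper than the argument presented in the paper. Both share the same outer skeleton: start from the optimality $S_n(\hat k_n,k^*_n)\geq S_n(k^*_n,k^*_n)$, decompose $S_n = \mu_n + Z_n + R_n$, obtain a lower bound on the deterministic drift $D_n(k) := \mu_n(k^*_n,k^*_n)-\mu_n(k,k^*_n)$ via the Taylor expansion \eqref{RCP:eq:mun}, and compare it with the stochastic fluctuation $V_n := Z_n + R_n$. The difference is in how the fluctuation is controlled. The paper's proof invokes only the \emph{global} bound $\sup_{t,\lambda}|V_n|=\mathcal{O}_\Pro(\sqrt{n\delta^2})$ of Corollary~\ref{RCP:res:ZnRn} and compares it with the drift estimate~\eqref{RCP:eq:rateMu} of order $n\delta^2(1-\lambda)$. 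But your linear lower bound $D_n(k)\gtrsim \delta^2|k-k^*_n|$ shows that when $|k-k^*_n|$ is of order $1/\delta^2$, the drift is only $\mathcal{O}(1)$ while the global noise bound $\sqrt{n\delta^2}$ diverges; hence the global comparison cannot, on its own, rule out deviations at the claimed scale. Indeed, read literally, the displayed probability chain in the paper's proof would yield $\delta^2|\hat k_n-k^*_n| = o_\Pro(1)$ for \emph{every} fixed $\kappa$, which is incompatible with the non-degenerate weak limit in Theorem~\ref{RCP:res:limitDist}. Your refinement is exactly what closes this gap: the localized modulus estimate $|V_n(k,k^*_n)-V_n(k^*_n,k^*_n)| = \mathcal{O}_\Pro(\sqrt{\delta^2|k-k^*_n|}\vee 1)$, upgraded to a uniform bound by peeling over shells $\{2^j\leq\delta^2|k-k^*_n|<2^{j+1}\}$, is the standard Kim--Pollard-type rate-of-convergence argument and delivers the $\mathcal{O}_\Pro(1)$ bound cleanly, with the exhibited randomness of $k^*_n$ absorbed into the second time parameter $\lambda$, as you note. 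The one delicate point, which you correctly flag, is the shell-by-shell maximal control of $R_n$: being a quadratic form in the partial sums, it calls for a Hájek--Rényi-type inequality (or the second-chaos machinery already assembled for Theorem~\ref{RCP:res:limitZn1}) rather than a bare Doob inequality, but this is a routine completion.
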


\begin{Rmk}[Convergence rates and minimum detectable size in slightly different models]
    Csörg\H{o} and Horváth \cite{CH97} also studied the consistency of the estimator in \eqref{RCP:def:estCP} provided that the true location of the change point $k^*_n$ is deterministic under slightly different assumptions:
    \begin{itemize}
        \item[i)] If the size of the change point is independent of $n$, they still obtain a convergence rate for the estimator $\hat{\lambda}_n$ of order $n^{-1}.$
        \item[ii)] If the change point fraction satisfies $k^*_n/n\rightarrow 0$ as $n\rightarrow \infty,$ i.e., the data contain an early change point, they obtain the same convergence rate as in Theorem \ref{RCP:res:consistency1}, but the detectable size of the change point has to be generally of larger order satisfying
        \[k^*_n \Delta^2 \rightarrow \infty\]
    \end{itemize}
    Although, we do not study these cases in our work, we expect to obtain similar results assuming that $k^*_n$ is itself random.
\end{Rmk}

\begin{proof}
    First, observe that for all $k,k^* \in \{1,\cdots, n-1\}$ with $k\leq k^*,$ we have
    \begin{equation*}
    \begin{split}
        \mu_n&(k,k^*)-\mu_n(k^*,k^*)\\
        &= (k-k^*) H(\tau_1) + (n-k)H\left(\frac{k^*-k}{n-k}\tau_1 + \frac{n-k^*}{n-k}\tau_2\right) - (n-k^*) H(\tau_2)\\
        &= (k^*-k) \left(H\left(\frac{k^*-k}{n-k}\tau_1 + \frac{n-k^*}{n-k}\tau_2\right) - H(\tau_1)\right)\\
        &\hspace{3cm}+ (n-k^*) \left(H\left(\frac{k^*-k}{n-k}\tau_1 +\frac{n-k^*}{n-k}\tau_2\right) - H(\tau_2)\right).
    \end{split}
    \end{equation*}
    Now, two applications of Taylor's formula of the second order yield that
    \begin{equation*}
    \begin{split}
        &\mu_n(k,k^*) - \mu_n(k^*,k^*)= \frac{1}{2}\frac{(k^*-k)(n-k^*)}{n-k}\Bigg(2(\tau_2-\tau_1)^T\left(H'(\tau_1)-H'(\tau_2)\right)\\
        & +\frac{n-k^*}{n-k}(\tau_1-\tau_2)^TH''(\tau_1) (\tau_1 - \tau_2)+ \frac{k^*-k}{n-k}(\tau_1-\tau_2)^TH''(\tau_2)(\tau_1-\tau_2)\Bigg)\\
        & + o\left(\frac{(k^*-k)(n-k^*)\delta^2}{n-k}\right),
    \end{split}
    \end{equation*}
    where we obtain the last summand by bounding the Lagrange remainder term. By the mean value theorem, there exists $\xi$ in the interval connection $\tau_1$ and $\tau_2$ such that 
    \begin{equation}\label{RCP:eq:mun}
    \begin{split}
        &\mu_n(k,k^*) - \mu_n(k^*, k^*) = \frac{1}{2}\frac{(k^*-k)(n-k^*)}{n-k}\Bigg(- 2 (\tau_1 - \tau_2)^T H''(\xi) (\tau_1 - \tau_2)\\
        & + \frac{n-k^*}{n-k} (\tau_1 -\tau_2)^T H''(\tau_1) (\tau_1 - \tau_2) + \frac{k^*-k}{n-k}(\tau_1 -\tau_2)^T H''(\tau_2) (\tau_1 - \tau_2)\Bigg)\\
        & + o\left(\frac{(k^*-k)(n-k^*) \delta^2}{n-k}\right).
    \end{split}
    \end{equation}
    Now, since $(\tau_1, \tau_2) \rightarrow (\tau_A, \tau_A)$ as $n\rightarrow \infty$ thanks to Assumption \ref{RCP:ass:cases} ii), we conclude for $n$ large enough, that the above difference is negative, increasing in $k$ for fixed $k^*,$ and
    \[\mu_n(k,k^*) - \mu_n(k^*,k^*) = \mathcal{O}\left(\frac{(k^*-k)(n-k^*)\delta^2}{n-k}\right).\]
    In particular, for all $\lambda \in (0,1),$ $t\in [0,\lambda - \kappa/(\delta^2 n))$ for some $\kappa > 0$, and $n$ large enough, we have
    \begin{equation}\label{RCP:eq:rateMu}
    \mu_n(t, \lambda) - \mu_n(\lambda,\lambda) = \mathcal{O}\left(n\delta^2(1-\lambda)\right),
    \end{equation}
    where we used that the above difference is increasing in $t$ for fixed $\lambda.$
    Moreover, observe that
   \[S_n(t, \lambda) = \mu_n(t,\lambda) + Z_n(t,\lambda) + R_n(t,\lambda)\]
   and thanks to Corollary \ref{RCP:res:ZnRn}, for any null sequence $(a_n)_{n\in \N}$ and $\epsilon > 0,$ we have
   \begin{equation}\label{RCP:eq:ZnRnVanish}
   \Pro\left[\frac{a_n}{\sqrt{n\delta^2}}\sup_{t,\lambda \in [0,1]} |Z_n(t,\lambda) + R_n(t,\lambda)| > \epsilon\right] \rightarrow 0.
   \end{equation}
    In the following, let us denote by $V_n(t,\lambda) := Z_n(t,\lambda) + R_n(t,\lambda).$ By Assumption \ref{RCP:ass:cases}, $\lambda^* \in [\gamma, 1-\gamma]$ for some $\gamma \in (0,1/2)$ with probability one. Hence, for each $\epsilon > 0,$ there exists an $N\in \N$ such that for each $n\geq N$, we have
    \[\Pro\left[\lambda^*_n \notin [\gamma/2, 1- \gamma/2]\right] < \epsilon.\] Next, let us define the set $A_n := \{\hat{k}_n \leq k^*_n\} \cap \{\lambda^*_n \in [\gamma/2, 1-\gamma/2]\}$. Applying \eqref{RCP:eq:rateMu} and \eqref{RCP:eq:ZnRnVanish}, for each $\kappa > 0,$ we have 
    \begin{equation*}
    \begin{split}
        \Pro\left[k^*_n - \hat{k}_n > \frac{\kappa}{\delta^2},\,  A_n \right] &= \Pro\left[\lambda^*_n - \hat{\lambda}_n >\frac{\kappa}{n\delta^2}, \, A_n \right]\\
        &= \Pro\left[\sup_{t\in [0,\lambda^*_n - \kappa/(n \delta^2))} S_n(t, \lambda^*_n) - S_n(\lambda^*_n, \lambda^*_n) \geq 0, \, A_n \right]\\
        &\leq \Pro\left[\sup_{t\in [0, \lambda^*_n-\kappa/(n\delta^2))} V_n(t, \lambda^*_n) - V_n(\lambda^*_n, \lambda^*_n) \geq n\delta^2(1-\lambda^*_n), \, A_n \right]\\
        &\leq \Pro\left[\sup_{\lambda \in [\gamma/2,1-\gamma/2]}\sup_{t \in [0, \lambda - \kappa/(n \delta^2))} 4 \gamma^{-1} |V_n(t, \lambda)| \geq n\delta^2, \, A_n\right] \rightarrow 0. 
    \end{split}
    \end{equation*}
    Analogously, we can show for each $\kappa > 0,$ that
    \[\Pro\left[\hat{k}_n - k^*_n > \frac{\kappa}{\delta^2},  \, B_n\right] \rightarrow 0,\]
    where $B_n := \{\hat{k}_n > k^*_n\} \cap \{\lambda^*_n \in [\gamma/2, 1-\gamma/2]\}.$
    Finally, for each $\epsilon > 0,$ there exist an $N\in \N$ and $\kappa > 0$ such that for all $n \geq N,$ we have 
    \begin{align*}
        \Pro &\left[|k^*_n - \hat{k}_n| > \frac{\kappa}{\delta^2}\right]\\
         &\leq \Pro\left[|k^*_n - \hat{k}_n| > \frac{\kappa}{\delta^2}, \, \lambda^*_n \in [\gamma/2, 1-\gamma/2] \right] + \Pro\left[\lambda^*_n \notin [\gamma/2, 1-\gamma/2]\right]\\
        & = \Pro\left[k^*_n - \hat{k}_n > \frac{\kappa}{\delta^2}, \,A_n \right] + \Pro\left[\hat{k}_n - k^*_n > \frac{\kappa}{\delta^2}, \,B_n \right] + \Pro\left[\lambda^*_n \notin [\gamma/2, 1-\gamma/2]\right]\\
        &\leq 3\epsilon.
    \end{align*}
    Hence, $\delta^2|k^*_n - \hat{k}_n| = \mathcal{O}_{\Pro}(1).$ Finally, since
    \[\delta^2|k^*_n - \hat{k}_n| = n\delta^2|\lambda^*_n - \hat{\lambda}_n|,\]
    we conclude that $\hat{\lambda}_n$ is a consistent estimator of $\lambda^*_n$ and $|\lambda_n^* - \hat{\lambda}_n| = \mathcal{O}_{\Pro}((n\delta^2)^{-1}).$
\end{proof}

In order to construct confidence intervals for $k^*_n,$ we need to establish limit distributions for the deviation $\delta^2(\hat{k}_n - k^*_n)$ provided that $H_1$ holds true. The next theorem gives us a first limit result for this deviation. However, it is only of theoretical interest, since $(\tau_1 - \tau_2)^T H''(\tau_A)(\tau_1-\tau_2)$, the size of a change, is unknown.

\begin{The}[Limit distribution of $\hat{k}_n$ under the alternative]\label{RCP:res:limitDist}
    Let the assumptions of Theorem \ref{RCP:res:limitZn1} be satisfied and assume that $\sigma^2_A$ introduced in Theorem \ref{RCP:res:limitZn1} exists. Moreover, let us introduce the process
    \[W^*(t) := \begin{cases}
        \sigma_AW_1(-t) - \frac{1}{2}\sigma_A^2|t|, & t <0,\\
        0, & t = 0,\\
        \sigma_AW_2(t) - \frac{1}{2}\sigma_A^2|t|, & t > 0,
    \end{cases}\]
    where $W_1$ and $W_2$ are two independent Brownian motions. Then,
    \[\delta^2(\hat{k}_n-k^*_n) \Rightarrow \argmax_{u\in (-\infty, \infty)} W^*(u).\]
\end{The}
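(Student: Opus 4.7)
I would work on the local scale $k = k_n^* + v/\delta^2$ dictated by Theorem \ref{RCP:res:consistency1} and study the rescaled profile
\[M_n(v) := S_n\!\left(k_n^* + \lfloor v/\delta^2\rfloor,\, k_n^*\right) - S_n(k_n^*,\, k_n^*), \qquad v\in\R.\]
The plan is to prove that $M_n \Rightarrow W^*$ in the Skorokhod sense on compacts of $\R$, after which an argmax continuous mapping theorem of Kim--Pollard type combined with the tightness of $\delta^2(\hat k_n - k_n^*)$ from Theorem \ref{RCP:res:consistency1} yields the claim (uniqueness and finiteness of $\argmax_{v\in\R}W^*(v)$ being guaranteed by the dominating quadratic drift). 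I would start from the decomposition $S_n = \mu_n + Z_n + R_n$ of \eqref{RCP:eq:testStatH1-centered} and treat each piece separately.

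For the deterministic piece, plugging $k = k_n^* + v/\delta^2$ into \eqref{RCP:eq:mun} (and its symmetric analogue for $k > k^*$) and using $(\tau_1,\tau_2)\to(\tau_A,\tau_A)$, the prefactor $(k^*-k)(n-k^*)/(n-k)$ collapses to $|v|/\delta^2\cdot(1+o(1))$ while the bracket simplifies to $-(\tau_1-\tau_2)^T H''(\tau_A)(\tau_1-\tau_2)\sim -\sigma_A^2\delta^2$, producing the drift $-\tfrac12\sigma_A^2|v|$ on both sides of the origin. For the stochastic piece, I would use \eqref{RCP:eq:defZn1}--\eqref{RCP:eq:defZn2} to cancel the $h_n(k_n^*/n)$-term (which does not depend on $k$) and telescope. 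Writing $\eta_{l,i} := T(X_{l,i}) - \tau_l$, for $v < 0$ the difference $Z_n(k_n^*+v/\delta^2,\, k_n^*) - Z_n(k_n^*,\, k_n^*)$ reduces to $(h_n(\alpha_n) - h_n(1))\sum_{i=k_n^*+v/\delta^2+1}^{k_n^*}\eta_{1,i}$ up to a term of the form $(h_n(\alpha_n)-h_n(0))\sum_{i=k_n^*+1}^n\eta_{2,i}$ with $\alpha_n\to 0$. The latter has modulus $O(\delta\cdot|v|/(n\delta^2))\cdot O(\sqrt n) = O(1/(\sqrt n\,\delta)) = o(1)$ under $n\delta^2\to\infty$. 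The leading term is a partial sum of length $|v|/\delta^2$ contracted with the row vector $h_n(1)-h_n(0)\approx(\tau_1-\tau_2)^T H''(\tau_A)$, which satisfies $(h_n(1)-h_n(0))\Sigma_A(h_n(1)-h_n(0))^T\to\sigma_A^2\delta^2$ (using $H''(\tau_A)=\Sigma_A^{-1}$), so Donsker's theorem (as in Lemma \ref{RCP:res:clt1}) yields convergence in law to $\sigma_A W_1(|v|)$. A symmetric argument for $v>0$ produces $\sigma_A W_2(v)$, and $W_1, W_2$ are independent because the sequences $(X_{1,i})_i$ and $(X_{2,i})_i$ are. The remainder $R_n$ contributes $o_\Pro(1)$ uniformly on compacts in $v$ by a quadratic-in-partial-sums estimate analogous to Proposition \ref{RCP:res:limitRn1}.

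\textbf{The main obstacle} is to propagate the above local FCLT uniformly in $v$ on compacts of $\R$ \emph{jointly} on both sides of $k_n^*$, with independent Brownian limits, while the pivot $k_n^*$ is itself random and possibly data-dependent. I would handle this by conditioning on $k_n^*/n$ and exploiting the stationarity and mutual independence of $(X_{1,i})_i$ and $(X_{2,i})_i$, so that the conditional local laws are unaffected by the location of the window; one then integrates out using $k_n^*/n\Rightarrow\lambda^*\in[\gamma,1-\gamma]$ from Assumption \ref{RCP:ass:cases}. Tightness of the two-parameter process $\tilde Z_n$ established in the proof of Theorem \ref{RCP:res:limitZn1}, together with continuity of $Z^*$ in both arguments, feeds naturally into this argument, and the argmax continuous mapping is applied to $M_n$ restricted to a sequence of compact intervals whose size is chosen to grow slowly enough that $\delta^2(\hat k_n - k_n^*)$ lies in them with probability tending to one.
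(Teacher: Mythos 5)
Your local decomposition of the profile around $k_n^*$ is correct and in fact matches the paper's quite closely: plugging $k = k_n^* + v/\delta^2$ into \eqref{RCP:eq:mun} does produce the drift $-\tfrac12\sigma_A^2|v|$ after using $H''(\tau_A)=\Sigma_A^{-1}$ and $(k^*-k)(n-k^*)/(n-k)\sim|v|/\delta^2$; the cancellation $Z_n(k,k^*)-Z_n(k^*,k^*) = (h_n(\alpha_n)-h_n(1))\sum_{i=k+1}^{k^*}\eta_{1,i} + (h_n(\alpha_n)-h_n(0))\sum_{i=k^*+1}^n\eta_{2,i}$ is exactly right; and your order estimate $|v|/(\sqrt n\,\delta)=o(1)$ for the second term and the variance identity $(h_n(1)-h_n(0))\Sigma_A(h_n(1)-h_n(0))^T\to\sigma_A^2\delta^2$ for the first are both correct. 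The paper organizes the same computation slightly differently (a second-order Taylor expansion of $S_n$ giving $V_{1,n},\dots,V_{4,n}$ plus a remainder, rather than first isolating $Z_n$), and also carries out a more explicit uniform estimate for the quadratic pieces ($V_{2,n}, V_{4,n}$) than your one-line appeal to Proposition~\ref{RCP:res:limitRn1}, which as stated only gives $R_n = O_\Pro(\sqrt{n\delta^2})$ and therefore does not by itself show that the local increment of $R_n$ is $o_\Pro(1)$ — you would need a telescoped estimate at the $1/\delta^2$ scale, as the paper's $V_{2,n},V_{4,n},R_{1,n}$ bounds in fact do.

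The genuine gap is in your handling of the random pivot. You propose to condition on $k_n^*/n$ and then argue that ``the conditional local laws are unaffected by the location of the window''. This is precisely where the argument breaks when $k_n^*$ depends on the data — the case the paper is designed to cover, cf.\ the stopping time \eqref{RCP:eq:stopTime}. Conditioning on $k_n^*/n$ tilts the conditional law of $(X_{1,i})_i$ (for instance, conditional on a first-passage time the increments just before it are not distributed as unconditional iid Gaussians), so the Donsker argument you invoke inside the conditional world no longer applies as stated; whether the tilt washes out at the microscopic $1/\delta^2$ scale is a nontrivial claim that would itself require a proof. The paper sidesteps this entirely: it establishes that $S_n(\lambda + \cdot/(n\delta^2),\lambda) - S_n(\lambda,\lambda)\Rightarrow W^*$ in $D([-\kappa,\kappa],\R)$ \emph{uniformly over all deterministic} $\lambda\in[\gamma,1-\gamma]$ — a two-parameter statement — and only then substitutes the random $\lambda_n^*$. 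Uniformity over the pivot, not conditioning on it, is what licenses the substitution of a data-dependent $k_n^*$. You gesture at the two-parameter tightness from Theorem~\ref{RCP:res:limitZn1} as a secondary tool, but your main stated plan rests on the problematic conditioning step. A minor point: the drift of $W^*$ is linear in $|t|$, not quadratic, and it is the law of the iterated logarithm for Brownian motion that gives a.s.\ finiteness (and, combined with strict negativity away from $0$, uniqueness) of $\argmax_{u\in\R}W^*(u)$.
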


Note that $\argmax_{u\in (-\infty, \infty)} W^*(u)$ is the canonical limit distribution from the literature (cf. e.g. \cite{CH97, JWY18}) for the deviation $\delta^2(\hat{k}_n - k^*_n)$ provided that the size of the change point vanishes as $n \rightarrow \infty.$ Moreover, the law of the iterated logarithm for the Brownian motion implies the almost sure finiteness of $\argmax_{u\in (-\infty, \infty)} W^*(u).$

We state the proof of this theorem at the end of this section. With a slight rescaling of the left hand-side in Theorem \ref{RCP:res:limitDist}, we can establish a distribution-free limit process.

\begin{Cor}\label{RCP:res:limitDistC1}
    Let the assumptions of Theorem \ref{RCP:res:limitZn1} be satisfied. Then, we have
    \[(\tau_1-\tau_2)^T H''(\tau_A)(\tau_1-\tau_2)(\hat{k}_n - k^*_n) \Rightarrow  \argmax_{u\in (-\infty, \infty)} \hat{W}(u),\]
     where the limit process $\hat{W}$ is defined by 
    \[\hat{W}(t) := \begin{cases}
        W_1(-t) - \frac{1}{2}|t|, & t <0,\\
        0, & t = 0,\\
        W_2(t) - \frac{1}{2}|t|, & t > 0,
    \end{cases}\]
    for two independent Brownian motions $W_1$ and $W_2$.
\end{Cor}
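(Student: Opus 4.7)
The plan is to derive the corollary from Theorem \ref{RCP:res:limitDist} via a deterministic scaling combined with Brownian self-similarity. The key observation is that by definition of $\sigma_A^2(n)$ in Theorem \ref{RCP:res:limitZn1}, we have the exact identity
\[
(\tau_1(n)-\tau_2(n))^T H''(\tau_A)(\tau_1(n)-\tau_2(n)) \;=\; \sigma_A^2(n)\,\delta^2(n),
\]
and $\sigma_A^2(n)\to\sigma_A^2$ by assumption. So the left-hand side of the corollary is exactly $\sigma_A^2(n)\,\delta^2(\hat{k}_n - k^*_n)$, which by Theorem \ref{RCP:res:limitDist} and Slutsky's theorem converges in distribution to $\sigma_A^2 \cdot \argmax_{u\in\R} W^*(u)$.

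It remains to identify this limit in law with $\argmax_{u\in\R}\hat{W}(u)$. First I would perform a change of variable: setting $t = u/\sigma_A^2$, one gets
\[
\sigma_A^2\,\argmax_{t\in\R} W^*(t) \;=\; \argmax_{u\in\R} W^*\!\left(\tfrac{u}{\sigma_A^2}\right).
\]
Then I would apply Brownian scaling: for each of the two independent Brownian motions $W_1, W_2$ appearing in $W^*$, the process $\tilde W_i(s) := \sigma_A W_i(s/\sigma_A^2)$ is again a standard Brownian motion, and the two rescaled processes remain independent. Plugging this in separately for $u<0$ and $u>0$ yields
\[
W^*\!\left(\tfrac{u}{\sigma_A^2}\right) \;=\; \begin{cases} \tilde W_1(-u) - \tfrac{1}{2}|u|, & u<0,\\ 0, & u=0,\\ \tilde W_2(u) - \tfrac{1}{2}|u|, & u>0,\end{cases}
\]
which is precisely a version of $\hat W$. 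Hence $\sigma_A^2\,\argmax_{t} W^*(t) \stackrel{d}{=} \argmax_u \hat W(u)$, and combining this with the Slutsky step above finishes the proof.

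There is essentially no real obstacle; the only technical point worth care is that the argmax functional is not continuous on all of $C(\R)$, so one needs the almost sure uniqueness and finiteness of the argmax of $\hat W$ (which follows from the law of the iterated logarithm and the strict concavity-in-drift structure of $\hat W$, as already noted after Theorem \ref{RCP:res:limitDist}) to justify both the continuous mapping step implicit in Theorem \ref{RCP:res:limitDist} and the distributional identification under the deterministic rescaling $t\mapsto \sigma_A^2 t$.
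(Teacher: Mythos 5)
Your proof is correct and follows essentially the same route as the paper: it derives the corollary from Theorem \ref{RCP:res:limitDist} by rewriting the prefactor as $\sigma_A^2(n)\,\delta^2(n)$, applying Slutsky, and then invoking Brownian self-similarity to identify $\sigma_A^2\,\argmax_u W^*(u)$ in law with $\argmax_u \hat W(u)$. The paper's own proof compresses this into a single appeal to Brownian scaling; you simply spell out the change of variables and the Slutsky step explicitly, and helpfully note the a.s. uniqueness of the argmax needed to justify the distributional identification.
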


\begin{proof}
    By the scaling property of the Brownian motion, i.e., $W(t) = c^{-1/2}W(ct),$ for a Brownian motion $W = (W(t))_{t \in [0,1]},$ $t\in [0,1]$, and all $c \in \R,$ we conclude that
    \begin{align*}
        \argmax_{u\in (-\infty, \infty)} W^*(u) \quad \text{ and } \quad \frac{1}{\sigma_A^2} \argmax_{u\in (-\infty, \infty)}\hat{W}(s)
    \end{align*}
    have the same distribution, where
    \[\sigma^2_A := \lim_{n\rightarrow \infty} \frac{(\tau_1-\tau_2)^TH''(\tau_A)(\tau_1-\tau_2)}{\delta^2}.\]
    This finishes the proof.
\end{proof}

Even if the right hand-side in Corollary \ref{RCP:res:limitDistC1} is distribution-free, $(\tau_1 - \tau_2)^TH''(\tau_A)(\tau_1-\tau_2),$ the size of a change, occurring on the left hand-side, is still unknown. Hence, in order to be able to construct confidence intervals for $k^*_n,$ we need to estimate $(\tau_1 - \tau_2)^TH''(\tau_A)(\tau_1-\tau_2),$ the size of a change. For that, we use the estimator 
\begin{equation}
    (B_n(\hat{k}_n) - B^*_n(\hat{k}_n))^T H''(B_n(n))(B_n(\hat{k}_n) - B^*_n(\hat{k}_n))
\end{equation}

For this reason, we will show in the next lemma that this is indeed a consistent estimator for the size of a change.

\begin{Lem} \label{RCP:res:constEstSize}
    Under the assumptions of Theorem \ref{RCP:res:limitZn1}, we have
    \[\frac{(B_n(\hat{k}_n) - B^*_n(\hat{k}_n))^TH''(B_n(n))(B_n(\hat{k}_n)-B^*_n(\hat{k}_n))}{(\tau_1-\tau_2)^TH''(\tau_A)(\tau_1-\tau_2)} \rightarrow 1 \quad \text{ in probability.}\]
\end{Lem}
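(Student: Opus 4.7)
The plan is to exploit two separate convergences: first, $B_n(n) \to \tau_A$ in probability, which by continuity of $H''$ (Remark \ref{RCP:rmk:derivativeH}) yields $H''(B_n(n)) \to H''(\tau_A)$ in probability; and second, $B_n(\hat{k}_n) - B_n^*(\hat{k}_n)$ equals $\tau_1 - \tau_2$ up to an error of strictly smaller order than $\delta = \|\tau_1 - \tau_2\|$. A quadratic-form expansion then reduces the claim to $1 + o_\Pro(1)$, with positive definiteness of $H''(\tau_A)$ providing the needed lower bound on the denominator.

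For the first convergence, write $B_n(n) = (k^*_n/n)\, B_n(k^*_n) + ((n-k^*_n)/n)\, B_n^*(k^*_n)$; under Assumption \ref{RCP:ass:cases} the law of large numbers gives $B_n(k^*_n) \to \tau_A$ and $B_n^*(k^*_n) \to \tau_A$ in probability, while $k^*_n/n \Rightarrow \lambda^* \in [\gamma, 1-\gamma]$, so $B_n(n) \to \tau_A$ in probability. For the second, the crucial input is Theorem \ref{RCP:res:consistency1}, giving $|\hat{k}_n - k^*_n| = \mathcal{O}_\Pro(\delta^{-2})$ and hence both $\hat{k}_n/n, k^*_n/n \in [\gamma/2, 1-\gamma/2]$ with probability tending to one. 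On the event $\{\hat{k}_n > k^*_n\}$,
\begin{equation*}
B_n(\hat{k}_n) - \tau_1 = \frac{\hat{k}_n - k^*_n}{\hat{k}_n}(\tau_2 - \tau_1) + \frac{1}{\hat{k}_n}\sum_{i=1}^{k^*_n}(T(X_{1,i}) - \tau_1) + \frac{1}{\hat{k}_n}\sum_{i=k^*_n+1}^{\hat{k}_n}(T(X_{2,i}) - \tau_2),
\end{equation*}
and an analogous decomposition holds for $B_n^*(\hat{k}_n) - \tau_2$, as well as on $\{\hat{k}_n \leq k^*_n\}$. The deterministic bias term has norm $\mathcal{O}_\Pro((n\delta^2)^{-1} \cdot \delta) = \mathcal{O}_\Pro((n\delta)^{-1}) = o_\Pro(n^{-1/2})$ since $n\delta^2 \to \infty$; the first sum is $\mathcal{O}_\Pro(n^{-1/2})$ by Donsker's theorem (Lemma \ref{RCP:res:clt1}); and the last sum, with $\mathcal{O}_\Pro(\delta^{-2})$ summands over a denominator of order $n$, is $\mathcal{O}_\Pro(\delta^{-1}n^{-1}) = o_\Pro(n^{-1/2})$. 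Combining the two sides gives $B_n(\hat{k}_n) - B_n^*(\hat{k}_n) = (\tau_1 - \tau_2) + r_n$ with $\|r_n\| = \mathcal{O}_\Pro(n^{-1/2}) = o_\Pro(\delta)$.

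To conclude, set $a_n := B_n(\hat{k}_n) - B_n^*(\hat{k}_n)$, $b_n := \tau_1 - \tau_2$, $M_n := H''(B_n(n))$, and $M := H''(\tau_A)$, and expand
\begin{equation*}
a_n^T M_n a_n = b_n^T M b_n + b_n^T (M_n - M) b_n + 2\, r_n^T M_n b_n + r_n^T M_n r_n.
\end{equation*}
Positive definiteness of $M$ (Assumption \ref{RCP:ass:reg} i) combined with $H''(x) = (A''(H'(x)))^{-1}$) ensures $b_n^T M b_n \geq c\, \delta^2$ for some $c > 0$ and all large $n$. The three error terms are, respectively, $o_\Pro(\delta^2)$ (by the first step and continuity of $H''$), $\mathcal{O}_\Pro(n^{-1/2}\delta) = o_\Pro(\delta^2)$, and $\mathcal{O}_\Pro(n^{-1}) = o_\Pro(\delta^2)$, each bound invoking $n\delta^2 \to \infty$. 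Dividing by $b_n^T M b_n \asymp \delta^2$ yields the claim. The main obstacle is the second step: one must simultaneously verify that the bias generated by the mismatch $\hat{k}_n \neq k^*_n$ and the CLT fluctuations of the sample means are \emph{both} of smaller order than $\delta$, and it is precisely the minimum-detectability condition $n\delta^2 \to \infty$ from Assumption \ref{RCP:ass:cases} that enforces $r_n = o_\Pro(\delta)$ rather than merely $r_n = \mathcal{O}_\Pro(\delta)$.
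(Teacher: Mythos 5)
Your argument is correct and follows the same route as the paper --- convergence of $B_n(n)$ to $\tau_A$ plus continuity of $H''$ for the middle factor, and a decomposition of $B_n(\hat{k}_n)-\tau_1$ (and $B^*_n(\hat{k}_n)-\tau_2$) into a mismatch bias plus centered partial sums for the outer factors --- but you are substantially more careful about the rates, and for good reason. The paper's proof only records $|B_n(\hat{k}_n)-\tau_1| = o_\Pro(1)$ and $|B^*_n(\hat{k}_n)-\tau_2| = o_\Pro(1)$ and then asserts the conclusion. As stated, that is not enough: the denominator is of order $\delta^2 \to 0$, so an error of magnitude $o_\Pro(1)$ in $a_n = b_n + r_n$ cannot by itself give $a_n^T M_n a_n / b_n^T M b_n \to 1$. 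What is actually needed is $\|r_n\| = o_\Pro(\delta)$, and you supply precisely that: all three pieces of $r_n$ are $\mathcal{O}_\Pro(n^{-1/2})$, and $n^{-1/2} = o(\delta)$ exactly because $n\delta^2 \to \infty$, which is then fed into the quadratic-form expansion to control the cross and square error terms against the lower bound $b_n^T M b_n \gtrsim \delta^2$ coming from positive definiteness of $H''(\tau_A)$. So you are not taking a different route so much as filling in the rate bookkeeping that the paper's one-line version leaves implicit; your version is the one that actually closes the argument.
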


\begin{proof}
    Since $\E[T(X_{1,1})] = A'(\theta^{(1)}_0) = \tau_1$ and $\E[T(X_{2,1})] = A'(\theta^{(2)}_0) = \tau_2,$ we conclude by Remark \ref{RCP:rmk:derivativesNormal}, Assumption \ref{RCP:ass:cases}, and an application of the weak law of large numbers that
    \[B_n(n) = \frac{1}{n}\sum_{i=1}^n T(X_i) \rightarrow A'(\theta_A) =: \tau_A \quad \text{ in probability.}\]
    Again, by Remark \ref{RCP:rmk:derivativesNormal}, we conclude that $H$ has continuous derivatives up to the third order. Therefore, we also have
    \[H''(B_n(n)) \rightarrow H''(\tau_A) \quad \text{ in probability.}\]
    Moreover, again by the weak law of large numbers and the consistency property of $\hat{k}_n$ (cf. Theorem \ref{RCP:res:consistency1}), we get that
    \begin{align*}
        &\left|B_n(\hat{k}_n)-\tau_1\right|=\left|\frac{1}{\hat{k}_n} \left(\sum_{i=1}^{\hat{k}_n} \left(T(X_{1,i}) - \tau_1\right)\right) \right|\1_{\{\hat{k}_n \leq k^*_n\}}\\
        &\qquad + \left|\frac{1}{\hat{k}_n}\left(\sum_{i=1}^{k^*_n}(T(X_{1,i}) - \tau_1) + \sum_{i=k^*_n}^{\hat{k}_n}(T(X_{2,i})-\tau_2) - (\hat{k}_n - k^*_n)(\tau_1 - \tau_2)\right)\right| \1_{\{\hat{k}_n > k^*_n\}}\\
        &= o_{\Pro}(1).
    \end{align*}
    Similarly, we conclude that
    \[\left|B_n^*(\hat{k}_n) - \tau_2\right|  = o_{\Pro}(1).\]
    Combining the above observations,  we finally obtain the stated result
    \[\frac{(B_n(\hat{k}_n) - B^*_n(\hat{k}_n))^TH''(B_n(n))(B_n(\hat{k}_n)-B^*_n(\hat{k}_n))}{(\tau_1-\tau_2)^TH''(\tau_A)(\tau_1-\tau_2)} \rightarrow 1 \quad \text{ in probability.}\]
\end{proof}

With all these preparations done, we are finally ready to state a distribution-free limit theorem for the deviation $(\hat{k}_n - k^*_n)$ under the alternative. This result can be used to build confidence intervals for $k^*_n.$

\begin{Cor}\label{RCP:res:limitDistC2}
    Let the assumptions of Theorem \ref{RCP:res:limitZn1} be satisfied. Then, we have
    \[\left(B_n(\hat{k}_n) - B^*_n(\hat{k}_n)\right)^TH''(B_n(n))\left(B_n(\hat{k}_n)-B^*_n(\hat{k}_n)\right)\left(\hat{k}_n - k^*_n\right) \Rightarrow \argmax_{u \in (-\infty, \infty)}\hat{W}(u).\]
\end{Cor}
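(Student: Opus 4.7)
The plan is to combine Corollary \ref{RCP:res:limitDistC1} with Lemma \ref{RCP:res:constEstSize} via Slutsky's theorem. Specifically, I would factor the left-hand side as
\[
\frac{\bigl(B_n(\hat{k}_n) - B^*_n(\hat{k}_n)\bigr)^T H''(B_n(n))\bigl(B_n(\hat{k}_n) - B^*_n(\hat{k}_n)\bigr)}{(\tau_1-\tau_2)^T H''(\tau_A)(\tau_1-\tau_2)} \cdot (\tau_1-\tau_2)^T H''(\tau_A)(\tau_1-\tau_2)\bigl(\hat{k}_n - k^*_n\bigr).
\]
By Lemma \ref{RCP:res:constEstSize}, the first factor tends to $1$ in probability, while by Corollary \ref{RCP:res:limitDistC1} the second factor converges in distribution to $\argmax_{u\in(-\infty,\infty)}\hat{W}(u)$. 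An application of Slutsky's theorem then yields the claim.

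The only technical point worth noting is that $(\tau_1-\tau_2)^T H''(\tau_A)(\tau_1-\tau_2)$ appearing in the denominator is strictly positive for all sufficiently large $n$: indeed, by Assumption \ref{RCP:ass:cases} together with Remark \ref{RCP:rmk:derivativesNormal}, $H''(\tau_A) = (A''(\theta_A))^{-1}$ is positive definite (Assumption \ref{RCP:ass:reg} i)), and $\tau_1 \neq \tau_2$ under the alternative since $\theta_0^{(1)} \neq \theta_0^{(2)}$ and $A'$ is injective on $\Theta_0$ (Assumption \ref{RCP:ass:reg} ii)), so the quadratic form stays positive, making the division well-defined. No further delicate estimates are required; the main work was already carried out in the preceding lemma and corollary, so this step is essentially a clean bookkeeping argument via Slutsky's theorem.
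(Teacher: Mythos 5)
Your proposal is essentially the paper's own proof: factor the quantity into the ratio from Lemma \ref{RCP:res:constEstSize} (tending to $1$ in probability) times the quantity from Corollary \ref{RCP:res:limitDistC1} (converging in distribution), and apply Slutsky's theorem. Your added remark on the positivity of $(\tau_1-\tau_2)^T H''(\tau_A)(\tau_1-\tau_2)$ is a correct and welcome clarification that the paper leaves implicit.
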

 
\begin{proof}
    This follows directly from Corollary \ref{RCP:res:limitDistC1}, Lemma \ref{RCP:res:constEstSize}, and an application of Slutzky's Lemma.
\end{proof}

Finally, we finish this section with stating the proof of Theorem \ref{RCP:res:limitDist}.

\begin{proof}[Proof of Theorem \ref{RCP:res:limitDist}]
    We show that for $\kappa > 0,$ it holds
    \[S_n(\lambda + \cdot/(n\delta^2), \lambda) - S_n(\lambda, \lambda) \Rightarrow W^*\]
    in the Skorokhod topology on the space $D([-\kappa, \kappa], \R)$ uniformly over $\lambda \in [\gamma, 1-\gamma].$ First, let us consider $k\in [k^*-\kappa/\delta^2, k^*],$ where $k^*/n \in [\gamma, 1-\gamma].$ Remark \ref{RCP:rmk:derivativesNormal} and a Taylor expansion of the second order yield for all $1\leq k \leq k^*,$
    \begin{align*}
        S_n(k,k^*)& - S_n(k^*,k^*) - (\mu_n(k,k^*) - \mu_n(k^*,k^*))\\
        &= V_{1,n}(k,k^*) + V_{2,n}(k,k^*) + V_{3,n}(k,k^*) + V_{4,n}(k,k^*) + R_{1,n}(k,k^*),
    \end{align*}
    where
    \[V_{1,n}(k,k^*) := H'(\tau_1)^T n^{1/2}\left(W^{(n)}_{1,k} - W^{(n)}_{1,k^*}\right),\]
    \[V_{2,n}(k,k^*) := \frac{n}{2k}\left(W^{(n)}_{1,k}\right)^TH''(\tau_1) W_{1,k}^{(n)} - \frac{n}{2k^*}\left(W^{(n)}_{1,k^*}\right)^T H''(\tau_1) W_{1,k^*}^{(n)},\]
    \begin{align*}
        V_{3,n}(k,k^*) &:= H'\left(\frac{k^*-k}{n-k}\tau_1 + \frac{n-k^*}{n-k}\tau_2\right)^T n^{1/2}\left(W^{(n)}_{1,k^*}-W^{(n)}_{1,k}+ W^{(n)}_{2,n} - W^{(n)}_{2,k^*}\right) \\
        &\qquad - H'(\tau_2)^T n^{1/2}\left(W_{2,n}^{(n)}-W^{(n)}_{2,k^*}\right),
    \end{align*}
    and 
    \begin{align*}
        V_{4,n}(k,k^*) &:= \frac{n}{2(n-k)}\left(W^{(n)}_{1,k^*}-W^{(n)}_{1,k}+ W^{(n)}_{2,n} - W^{(n)}_{2,k^*}\right)^T\\
        &\qquad \qquad \times H''\left(\frac{k^*-k}{n-k}\tau_1 + \frac{n-k^*}{n-k}\tau_2\right) \left(W_{1,k^*}^{(n)}- W^{(n)}_{1,k}+ W^{(n)}_{2,n}- W^{(n)}_{2,k^*}\right)\\
        &\qquad - \frac{n}{2(n-k^*)}\left(W^{(n)}_{2,n} - W^{(n)}_{2,k^*}\right)^T H''(\tau_2) \left(W^{(n)}_{2,n}- W^{(n)}_{2,k^*}\right).
    \end{align*}
    Moreover, Remark \ref{RCP:rmk:derivativesNormal} and Assumption \ref{RCP:ass:cases} imply that the remainder term $R_{1,n}(k,k^*)$ of Lagrange form is of order $o_{\Pro}(1)$ uniformly in $k, k^*$ and hence vanishes in probability as $n\rightarrow \infty.$
    Applying Donsker's theorem (cf. Lemma \ref{RCP:res:clt1}), we obtain for all $\kappa > 0,$
    \begin{equation}\label{RCP:eq:1}
    \begin{split}
        &\sup_{k^*-\kappa/\delta^2 \leq k \leq k^*}\left\|n^{1/2}\delta \left(W^{(n)}_{1,k^*}- W_{1,k}^{(n)}\right)\right\|= \sup_{u\in [-\kappa, 0]}\left\|\delta\hspace{-0.2cm} \sum_{j=k^*+\lfloor u/\delta^2\rfloor}^{k^*}(T(X_{1,i})-\tau_1)\right\|\\
        &= \sup_{u\in [-\kappa, 0]}\left\|\delta \sum_{j=0}^{\lfloor -u/\delta^2\rfloor}(T(X_{1,k^*-j})-\tau_1)\right\|= \mathcal{O}_{\Pro}(1),
    \end{split}
    \end{equation}
    uniformly over $k^* \in [n\gamma, n(1-\gamma)]$.
    Moreover, for $k \in [k^* - \kappa/\delta^2, k^*],$ we have
    \begin{equation}\label{RCP:eq:2}
    \frac{n(k^*-k)}{kk^*} = \mathcal{O}\left((n\delta^2)^{-1}\right).
    \end{equation}
    Hence, applying Assumption \ref{RCP:ass:reg}, Assumption \ref{RCP:ass:cases}, equation \eqref{RCP:eq:1},  equation \eqref{RCP:eq:2}, and Donsker's theorem (cf. Lemma \ref{RCP:res:clt1}), we obtain
    \begin{align*}
        \sup_{k^*-\kappa/\delta^2 \leq k \leq k^*}&\left|V_{2,n}(k,k^*)\right| \leq \sup_{k^*-\kappa/\delta^2 \leq k \leq k^*}\frac{1}{2}\frac{n(k^*-k)}{kk^*}\left|(W^{(n)}_{1,k})^TH''(\tau_1) W^{(n)}_{1,k}\right|\\
        & + \sup_{k^*-\kappa/\delta^2 \leq k \leq k^*}\frac{1}{2}\frac{n}{k^*}\left|\left\{W^{(n)}_{1,k} - W^{(n)}_{1,k^*}\right\}^T H''(\tau_1) W^{(n)}_{1,k}\right|\\
        & + \sup_{k^*-\kappa/\delta \leq k \leq k^*}\frac{1}{2}\frac{n}{k^*}\left|(W^{(n)}_{1,k^*})^T H''(\tau_1)\left\{W^{(n)}_{1,k}-W^{(n)}_{1,k^*}\right\}\right|\\
        &= o_{\Pro}(1)
    \end{align*}
    uniformly over $k^* \in [n\gamma, n(1-\gamma)].$ Bounding $V_{4,n}$ in a similar way, we obtain for arbitrary $\kappa > 0$ and $n$ large enough
    \[\sup_{\lambda \in [\gamma, 1-\gamma]} \sup_{t \in [\lambda-\kappa/(n\delta^2), \lambda]} |V_{2,n}(t,\lambda) + V_{4,n}(t,\lambda)| = o_{\Pro}(1).\]
    Next, applying Donsker's theorem (cf. Lemma \ref{RCP:res:clt1}), we obtain again for $\kappa > 0$ and $n$ large enough
    \begin{align*}
        &\sup_{k^*-\kappa/\delta^2 \leq k \leq k^*}\left|V_{1,n}(k,k^*) + V_{3,n}(k,k^*) - (H'(\tau_2) - H'(\tau_1))^Tn^{1/2}(W^{(n)}_{1,k^*}-W^{(n)}_{1,k})\right|\\
        &= \sup_{k^*-\kappa/\delta^2 \leq k \leq k^*}\Bigg|n^{1/2}\left(H'\left(\frac{k^*-k}{n-k}\tau_1 + \frac{n-k^*}{n-k}\tau_2\right) - H'(\tau_2)\right)^T\\
        &\hspace{6cm} \times \left(W_{1,k^*}^{(n)}-W_{1,k}^{(n)}+ W_{2,n}^{(n)}-W^{(n)}_{2,k^*}\right)\Bigg|\\
        &= \sup_{k^*-\kappa/\delta^2 \leq k\leq k^*} n^{1/2}\delta \frac{k^*-k}{n-k}\Bigg|\frac{(\tau_1 - \tau_2)^T}{\delta}H''(\tau_A)\\
        &\hspace{6cm}\times \left(W^{(n)}_{1,k^*}-W^{(n)}_{1,k}+ W^{(n)}_{2,n}-W^{(n)}_{2,k^*}\right)\Bigg| + o_{\Pro}(1)\\
        &= o_{\Pro}(1)
    \end{align*}
    uniformly over $k^* \in [n\gamma, n(1-\gamma)].$ Combining the above bounds, we conclude that
    \begin{align*}
        \sup_{k^*-\kappa/\delta^2 \leq k \leq k^*}\Big|S_n(k,k^*)& - S_n(k^*,k^*) - (\mu_n(k,k^*) - \mu_n(k^*,k^*))\\
        & - n^{1/2}(H'(\tau_2)- H'(\tau_1))^T(W_{1,k^*}^{(n)}-W_{1,k}^{(n)})\Big| = o_{\Pro}(1)
    \end{align*}
    uniformly over $k^* \in [n\gamma, n(1-\gamma)].$ Similarly, we can show that
    \begin{align*}
        \sup_{k^*\leq k \leq k^*+\kappa/\delta^2} \Big|S_n(k,k^*)& - S_n(k^*,k^*) - (\mu_n(k,k^*) - \mu_n(k^*,k^*))\\
        &- n^{1/2}(H'(\tau_1) - H'(\tau_2))^T(W_{2,k}^{(n)}-W_{2,k^*}^{(n)})\Big| = o_{\Pro}(1)
    \end{align*}
    uniformly over $k^* \in [n\gamma, n(1-\gamma)].$ Furthermore, studying equation \eqref{RCP:eq:mun}, for arbitrary $\kappa >0$, we have
    \[\sup_{-\kappa \leq u \leq \kappa}\left|\mu_n(k^*+u/\delta^2, k^*) - \mu_n(k^*,k^*) - \frac{1}{2}|u|(\tau_1-\tau_2)^T H''(\tau_A)(\tau_1 -\tau_2)/\delta^2\right| = o(1)\]
    uniformly over $k^* \in [n\gamma, n(1-\gamma)].$ Combining the above equation with equation \eqref{RCP:eq:1} and Donsker's theorem (cf. Lemma \ref{RCP:res:clt1}), we finally obtain for arbitrary $\kappa > 0$ that
    \[S_n(\lambda + \cdot/(\delta^2n), \lambda) - S_n(\lambda, \lambda) \Rightarrow W^*\]
    in the Skorokhod topology on the space $D([-\kappa, \kappa], \R)$ uniformly over $\lambda \in [\gamma, 1-\gamma].$ Now, since
    \[\delta^2|\hat{k}_n - k^*_n| = \argmax_{k \in [-\kappa/\delta^2, \kappa/\delta^2]} \left\{S_n(k,k^*) - S_n(k^*,k^*)\right\}\]
    for some $\kappa >0$ large enough and the $\argmax$ function is continuous, we conclude the stated result.
\end{proof}

\section{Simulation study}

In this section, we discuss the derived asymptotic properties for the test statistic $\mathcal{S}_n := \max_{1\leq k \leq n}\{2S_n(k)\}$ and the estimator $\hat{k}_n$ of $k^*_n$ through several simulation studies. \par 
We simulate time series data as follows: the data points $\mathcal{Y}^{(n)} := \{Y^{(n)}_k: k = 1,\cdots, n\}$ are simulated from two independent normal distributions $N_1 \sim \mathcal{N}(n^{-1/2} \mu_1, \sigma_1^2)$ and $N_2 \sim \mathcal{N}(n^{-1/2}\mu_2, \sigma_2^{2})$ such that
\[Y^{(n)}_k \sim N_1 \1_{\{k\leq k^*_n\}} + N_2 \1_{\{k > k^*_n\}}\]
for $k = 1,\cdots, n,$ $\mu_1, \mu_2 \in \R,$ and $\sigma_1, \sigma_2 \in \R_+.$ Then, for $n$ large enough, the discrete-time process $X^{(n)}(t) = \sum_{k=1}^n X^{(n)}_k \1_{\{nt \in [k, k+1)\}}$ with $X^{(n)}_k := n^{-1/n} \sum_{j=1}^k Y^{(n)}_j$ can be approximated by the continuous-time diffusion process
\begin{equation}\label{RCP:eq:limitDiff}
X(t) = \int_0^t \left(\mu_1 \1_ {\{t \leq \lambda^*\}} + \mu_2 \1_{\{t> \lambda^*\}} \right)dt + \int_0^t \left(\sigma_1 \1_{\{t\leq \lambda^*\}} + \sigma_2 \1_{\{t > \lambda^*\}}\right) dW(t),
\end{equation}
where $W$ is a standard Brownian motion and $\lambda^* \in [\gamma, 1-\gamma],$ $\gamma \in (0,1/2),$ is the weak limit of the true change point fraction $k^*_n/n$ (cf. Assumption \ref{RCP:ass:cases}). In this setting, the data points $\mathcal{Y}^{(n)}$ might be interpreted as the scaled increments of $X$ recorded at discrete, equidistant time steps $t^{(n)}_k := k/n,$ $k = 1,\cdots, n.$ Moreover, we simulate the location of the change point by a stopping time that depends on the data. Throughout, the change point fraction $\lambda^*_n := k^*_n/n$ is generated, for $\kappa \in \R$ fixed, by
\begin{equation}\label{RCP:eq:stopTime}
    \lambda^*_n = \inf\left\{t \geq \gamma: X^{(n)}(t) < \kappa\right\} \wedge (1-\gamma).
\end{equation}

\begin{Rmk}
    We present the empirical results where the location of a change point is generated from the stopping time above. Moreover, we also run simulations when $\lambda^*_n$ is sampled from a uniform distribution on $[\gamma, 1-\gamma]$ or a truncated normal distribution with mean $\frac{1}{2}$ and volatility $\frac{1}{6}-\frac{\gamma}{3}$. In both cases, the empirical observations are not significantly different from those we discuss below.
\end{Rmk}

In the following, we choose $\gamma = 0.1,$ $\kappa = -1$ and simulate $n = 10,000$ time steps. All depicted empirical distributions are generated from $m = 10,000$ Monte Carlo runs.\newline

\textbf{Parametric change point detection in the mean and volatility:} Let us choose $\mu_1 = \mu_2 = -2,$ $\sigma_1 = 1,$ and $\sigma_2 = 1.1,$ i.e., we consider a jump in the volatility of $N_1$ versus $N_2$ of size $0.1.$ Then we have $n\Delta^2 = n(\sigma_1 - \sigma_2)^2 = 100.$ So, we might be in the studied setting of Theorem \ref{RCP:res:consistency1} and hope to detect the quite small jump in the volatility. In Figure \ref{RCP:fig:jumpVol}, we depict one realization of $X^{(n)}$ under the alternative and the empirical values of the test statistic $\mathcal{S}_n^{1/2}$ under the null and under the alternative hypothesis. While the jump in volatility is not visible to the naked eye, the empirical values of the test statistic show that our test can very well separate the null hypothesis ``no change point'' from the alternative hypothesis ``there exists one change point''.

\begin{figure}[H]
    \centering
    \small{\textit{Change point model with parameters $\sigma_1 = 1$ and $\sigma_2 = 1.1$}}\\
    \includegraphics[scale = 0.43]{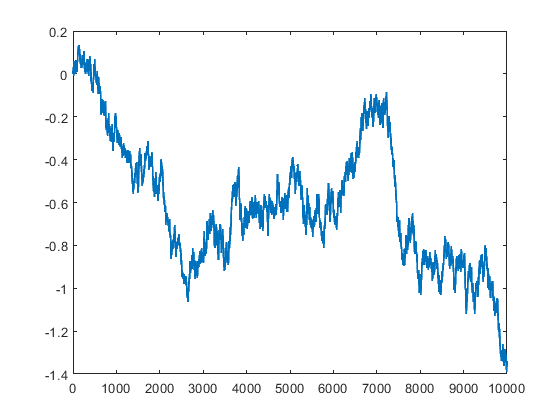}\quad 
    \includegraphics[scale = 0.43]{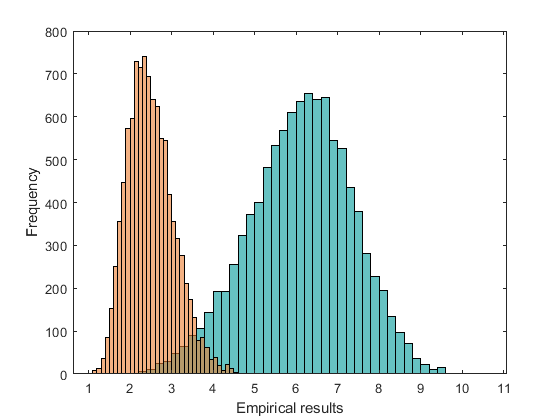}
    \caption{Left: One path of $X^{(n)}$ under the alternative with change point after $n = 2620$ time steps. Right: Empirical values of $\mathcal{S}^{1/2}_n$ under the null (orange) and the alternative (turquoise).}
    \label{RCP:fig:jumpVol}
\end{figure}

In a second simulation, we choose $\mu_1 = -2,$ $\mu_2 = -12,$ $\sigma_1 = 1,$ and $\sigma_2 = 1,$ i.e., we consider a jump in the mean of $N_1$ versus $N_2$ of size $n^{-1/2}\cdot (-10) = -0.1.$ Again, we have $n\Delta^2 = 100,$ so that we might hope to detect the jump in the mean. In Figure \ref{RCP:fig:jumpMean}, we depict one realization of $X^{(n)}$ under the alternative and the empirical values of the test statistic $\mathcal{S}^{1/2}_n$ under the null and under the alternative hypothesis. Even though we can see the jump in the expected value after $n \approx 3000$ time steps in the realization of $X^{(n)}$ very clearly, the empirical distributions of the test statistic $\mathcal{S}^{1/2}_n$ suggest that the null hypothesis ``no change point'' is harder to distinguish from the alternative hypothesis ``there exists one change point'' compared to our first simulation in Figure \ref{RCP:fig:jumpVol}. Moreover, if we interpret the observations $\mathcal{Y}^{(n)}$ as the discretely observed scaled increments of a diffusion process $X$, we see from a comparison of these two simulation studies that the detection of a jump in drift component of $X$ is harder than in its volatility component. In more detail, in order to guarantee that the condition in \eqref{RCP:ass:sizeCP} holds true and hence we are able to distinguish between the null and alternative, a change in the drift component has to converge to infinity, while the change in the volatility component might even go to zero as $n\rightarrow \infty.$ Note that these observations are also consistent with the theoretical results in \cite{AHHR09, JWY18}.

\begin{figure}[H]
    \centering
    \small{\textit{Change point model with parameters $\mu_1 = -2$ and $\mu_2 = -12$}}\\
    \includegraphics[scale = 0.43]{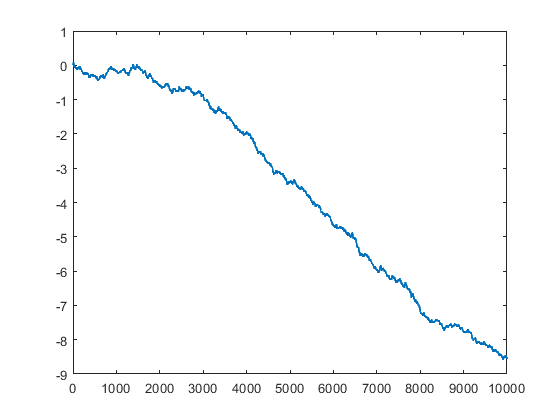} \quad
     \includegraphics[scale = 0.43]{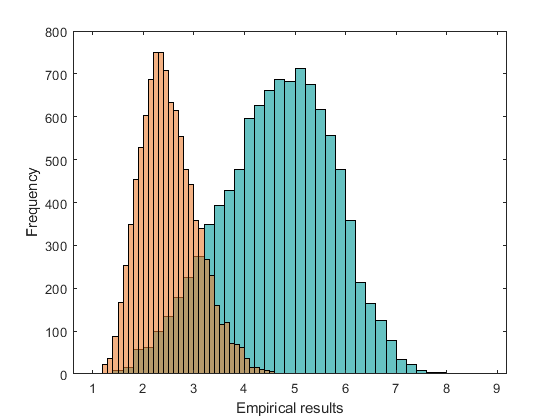}
    \caption{Left: One path of $X^{(n)}$ under the alternative with change point at $n = 3024$ time steps. Right: Empirical values of $\mathcal{S}^{1/2}_n$ under the null (orange) and under the alternative (turquoise).}
    \label{RCP:fig:jumpMean}
\end{figure}

For both simulations, we also calculate the empirical distribution of $\delta^2(\hat{k}_n - k^*_n).$ In both cases, we obtain that the empirical distribution replicates the theoretical result from Theorem \ref{RCP:res:limitDist}, cf. Figure \ref{RCP:fig:limitDist}, where we depict the empirical distribution of $\argmax_{u\in (-\infty, \infty)} \hat{W}(u)$ versus the empirical distribution of $\sigma_A^2 \delta^2(\hat{k}_n - k^*_n)$ for a change point in the mean.

\begin{figure}[H]
    \centering
    \includegraphics[scale = 0.43]{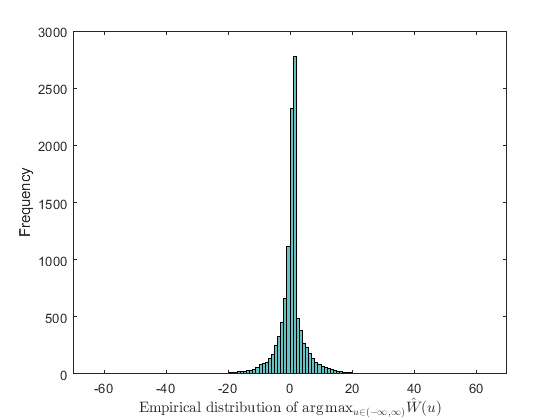}\quad
    \includegraphics[scale = 0.43]{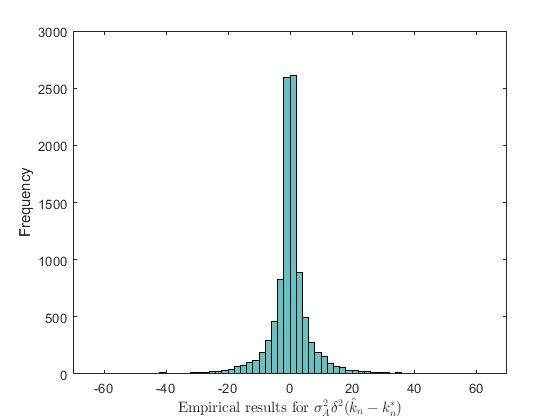}
    \caption{Left: Empirical distributions of $\argmax_{u\in (-\infty, \infty)} \hat{W}(u).$ Right: Empirical distribution of the deviation $\sigma^2_A\delta^2(\hat{k}_n - k^*_n)$.}
    \label{RCP:fig:limitDist}
\end{figure}

\textbf{Parametric change point detection for weakly dependent observations:} Although, we developed our theory for independent observations, weak dependencies between subsequent observations do not ruin our empirical results. To see that, for $a \in (-1,1)$, let 
\[\tilde{Y}^{(n)}_1 := Y^{(n)}_1, \quad \tilde{Y}^{(n)}_k := a \,Y^{(n)}_{k-1} + \sqrt{1-a^2}\, Y^{(n)}_{k} \quad \text{ for } k \geq 2,\]
and $\tilde{X}^{(n)}(t) := \sum_{k=1}^n\tilde{X}^{(n)}_k \1_{\{nt \in [k,k+1)\}}$ with $\tilde{X}^{(n)}_k := n^{-1/2}\sum_{j=1}^k\tilde{Y}^{(n)}_j.$ In the following simulation, we choose again $\mu_1 = \mu_2 = -2,$ $\sigma_1 = 1,$ and $\sigma_2 = 1.1.$ Moreover, we choose $a = 1/2.$ In Figure \ref{RCP:fig:jumpVol-Dependent}, we depict one realization of $\tilde{X}^{(n)}$ and the empirical values of the test statistic $\mathcal{S}^{1/2}_n$ under the null and alternative hypothesis. We observe that even for weakly dependent observations, the test statistic $\mathcal{S}^{1/2}_n$ is still able to distinguish between the null and alternative hypothesis. Moreover, also the empirical distribution of $\delta^2(\hat{k}_n - k^*_n)$ replicates the theoretical result from Theorem \ref{RCP:res:limitDist} (cf. Figure \ref{RCP:fig:limitDist-Dependent}).

\begin{figure}[H]
    \centering
    \small{\textit{Change point model for weakly dependent observations}}\\
    \includegraphics[scale = 0.43]{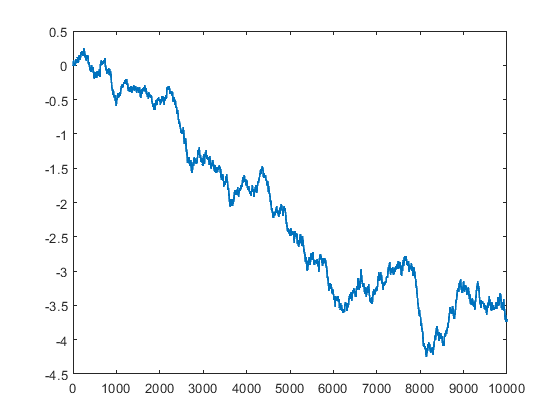}\quad 
    \includegraphics[scale = 0.43]{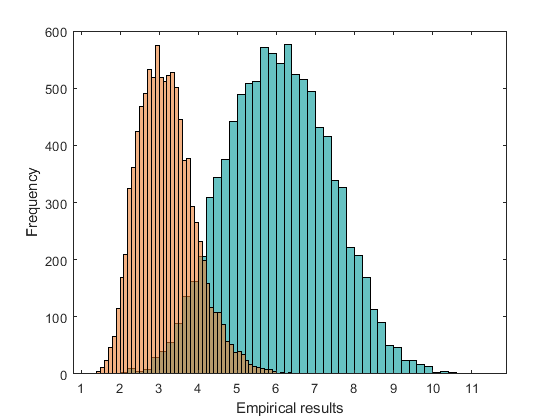}
    \caption{Left: One path of $\tilde{X}^{(n)}$ under the alternative with change point at $n = 2522$ time steps. Right: Empirical values of $\mathcal{S}^{1/2}_n$ under the null (orange) and under the alternative (turquoise).}
    \label{RCP:fig:jumpVol-Dependent}
\end{figure}

\begin{figure}[H]
    \centering
    \includegraphics[scale = 0.43]{images/disthatW2.png}\quad
    \includegraphics[scale = 0.43]{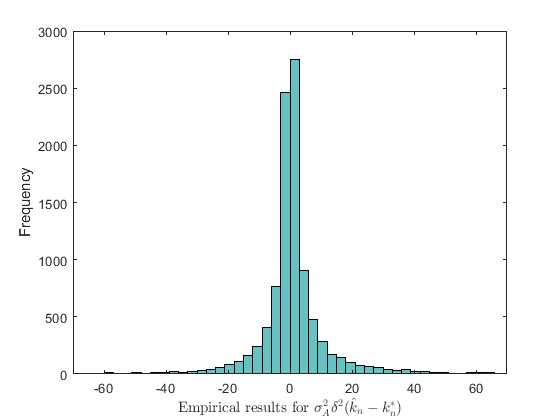}
    \caption{Left: Empirical distribution of $\argmax_{u\in (-\infty, \infty)} \hat{W}(u).$ Right: Empirical distribution of the deviation $\sigma^2_A\delta^2(\hat{k}_n - k^*_n)$.}
    \label{RCP:fig:limitDist-Dependent}
\end{figure}

Our empirical results therefore suggest that at least for weakly dependent observations, we are probably able to establish the stated asymptotic properties for the test statistic $\mathcal{S}_n$ and the estimators $\hat{k}_n$ and $\hat{\lambda}_n.$ \newline

\textbf{Non-parametric change point detection in the volatility process:} For many practical applications, an approximation as in \eqref{RCP:eq:limitDiff} does not describe the underlying structure of the observations well. In contrast, one might be interested in whether or not there is a jump in the volatility process $\sigma: \Omega \times [0,T] \rightarrow \R_+$ of an It\^o-semimartingale. The authors in \cite{BJV17} developed a statistical change point theory to detect, among others, a ``local jump'' in the  volatility process such that $|\sigma^2(\lambda) - \lim_{s\uparrow \lambda} \sigma^2(s)| > 0$ for some $\lambda \in (0,1).$ Let us consider a volatility process of the form
\[\sigma(t) = \left(\int_0^t c \cdot \rho \,dW(s) + \int^t_0 \sqrt{1-\rho^2} \cdot c \,dW^{\perp}(s) + 1\right) \cdot v(t)\]
which fluctuates around a deterministic seasonality function
\[v(t) = 1 - 0.2\sin\left(\frac{3}{4}\pi t\right), \quad t\in [0,1],\]
with $c = 0.1$ and $\rho = 0.5,$ where $W^{\perp}$ is a standard Brownian motion independent of $W.$ Note that the authors of \cite{BJV17} studied the same volatility process but for a deterministic location of the change point. Again, we simulate $\lambda^*_n$ by \eqref{RCP:eq:stopTime} and add one jump of size $0.3$ at time $\lambda^*_n$ to $\sigma.$ Since the volatility process is time-dependent, we apply the test statistic $V^*_{n,u_n}$ introduced in \cite{BJV17} instead of $\mathcal{S}_n$. Let $\Delta X^{(n)}_k := X^{(n)}_{k}- X^{(n)}_{k-1},$ $k\geq 1,$ be the increments of an It\^o-semimartingale $X$ with the volatility process above and constant drift equal to $-2$, recorded at discrete time steps $t^{(n)}_k,$ $k\geq 1$. Then, a reasonable test statistic is
\[V_{n,u_n}^* := \max_{i=k_n, \cdots, n-k_n} \left|\frac{\frac{n}{k_n}\sum_{j=i-k_n+1}^i (\Delta X^{(n)}_j)^2 \1_{\{|\Delta X^{(n)}_j| \leq u_n\}}}{\frac{n}{k_n}\sum_{j=i+1}^{i+k_n} (\Delta X^{(n)}_j)^2 \1_{\{|\Delta X^{(n)}_j|\leq u_n\}}} - 1\right|\]
where $k_n \rightarrow \infty.$ The core idea of the test statistic is to utilize a local two-sample $t$-test over $k_n$ asymptotically small blocks and take all overlapping blocks of $k_n$ increments into account. Moreover, we truncate the increments of $X$ by $u_n$ to exclude large squared increments which are ascribed to jumps. In \cite{BJV17}, the authors suggest to take $u_n = \sqrt{2\log(n)}n^{-1/2}$ and $k_n = C(\log(n))^{1/2} n^{1/2},$ for some $C > 1.$\par 
In Figure \ref{RCP:fig:jumpVolProcess}, we depict one realization of $\sigma$ under the null and under alternative hypothesis. Moreover, in Figure \ref{RCP:fig:jumpVolProcessTests}, we depict one realization of $X$ under the alternative and the empirical values of the test statistic \[\mathcal{V}_n:=\sqrt{\frac{\log(m_n)k_n}{2}}V^*_{n,u_n} - 2\log(m_n) - \frac{1}{2}\log\log(m_n) - \log(3)\]
under the null and under the alternative hypothesis. Here, $m_n := \lfloor n/k_n\rfloor.$ Accoring to Proposition 3.5 in \cite{BJV17}, the test statistic $\mathcal{V}_n$ converges in distribution under the null hypothesis to a Gumbel distribution. We observe that the test statistic in \cite{BJV17} can fairly good distinguish between the null and alternative even if the location of the change point has been sampled from the distribution in \eqref{RCP:eq:stopTime}.

\begin{figure}[H]
    \centering
    \includegraphics[scale = 0.43]{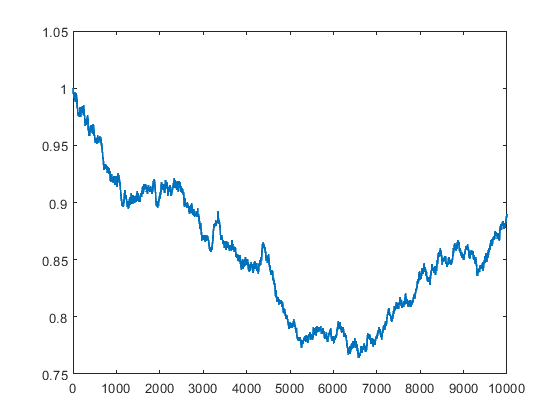} \quad
    \includegraphics[scale = 0.43]{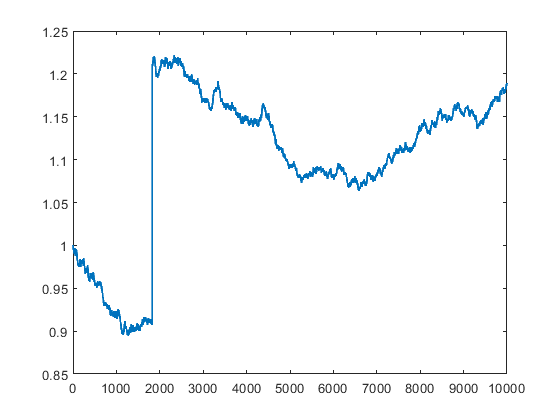}
    \caption{One realization of the volatility process $\sigma$ under the null (left) and under the alternative (right).}
    \label{RCP:fig:jumpVolProcess}
\end{figure}

\begin{figure}[H]
    \centering
    \textit{Non-parametric change point model with a jump in the volatility process of size $0.3$}\\
    \includegraphics[scale=0.43]{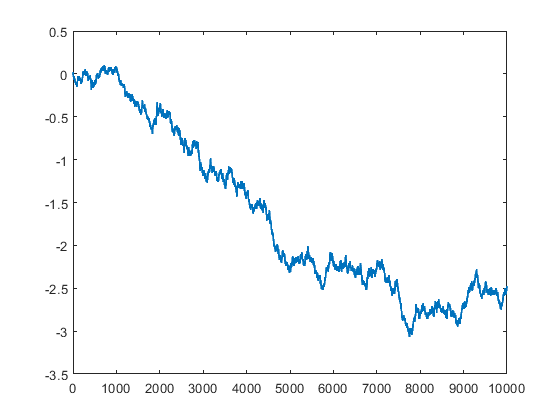}\quad
    \includegraphics[scale=0.43]{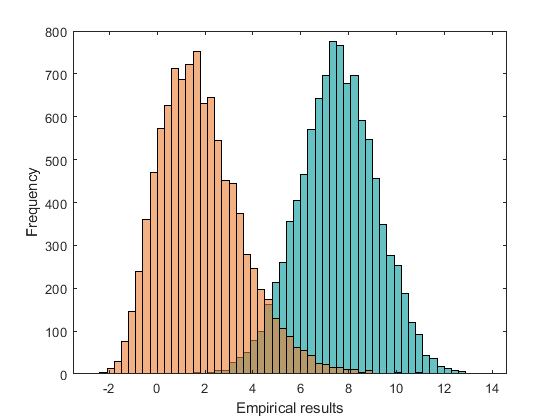}
    \caption{Left: One path of $X$ under the alternative with change point at $n = 1830$ time steps. Right: Empirical values of $\mathcal{V}_n$ under the null (orange) and under the alternative (turquoise).}
    \label{RCP:fig:jumpVolProcessTests}
\end{figure}

\textbf{Conclusion:} The starting point of our work was to generalize the theory in Csörg\H{o} and Horváth \cite{CH97} to randomly occurring change points in the model parameters, where, in particular, the location of the change point is allowed to depend on the data itself. In our simulation study, we generated the location of the change point from the stopping time in \eqref{RCP:eq:stopTime}. This stopping time is a rather simple way to choose the location of the change point depending on the data. From a financial point of view it is still quite interesting: the process X in \eqref{RCP:eq:limitDiff} might be an approximation for log prices of a financial asset containing a change point. Then, the stopping time in \eqref{RCP:eq:stopTime} causes the change in the model parameters of the log price if the price drops below some critical value $\kappa \in \R.$ It also shows that our theory is flexible enough to be applied to even more complex dependence relationships between the location of the change point and the observed data. \par 
Finally, our simulations for the case of weakly dependent observations as well as the non-parametric case suggest that change point theory in these settings still work even if the location of the change point depends on the data.

\section*{Acknowledgement}
Financial support by MATH+ through project funding AA4-4 “Stochastic modeling of intraday electricity
markets” is gratefully acknowledged.

\bibliographystyle{plain}
\bibliography{lit}

\end{document}